\documentclass[reqno,12pt,twoside]{ip-journal}
\bibliographystyle{plain}
\topmargin      -0.3in  
\headheight      0.2in  
\headsep         0.3in  
\textheight      8.9in  
\footskip        0.3in  
\oddsidemargin   0.0in  
\evensidemargin  0.0in  
\textwidth       6.5in  
\parskip         12pt
\marginparwidth=0.75in

\newtheorem*{conj*}{Conjecture}

\newtheorem*{thm*}{Theorem}

\newtheorem{prop}{Proposition}[section]

\newtheorem{LM}{Lemma}[section]

\newtheorem{thm}{Theorem}[section]

\newtheorem{cor}{Corollary}[section]

\newtheoremstyle{pourlesremarques}{\topsep}{\topsep}{\normalfont}{}{\bfseries}{.}{ }{}
\theoremstyle{pourlesremarques}

\newtheorem*{rem*}{Remark}
\newtheoremstyle{pourlesexemples}{\topsep}{\topsep}{\normalfont}{}{\bfseries}{.}{ }{}
\theoremstyle{pourlesexemples}

\renewcommand{\o}{\mathfrak{O}}

\newcommand{\R}{\mathbb{R}}
\renewcommand{\AA}{\mathbb{A}}
\renewcommand{\l}{\lambda}
\newcommand{\C}{\mathbb{C}}

\newcommand{\N}{\mathbb{N}}

\newcommand{\1}{\mathbf{1}}

\newcommand{\bpm}{\begin{pmatrix}}
\newcommand{\epm}{\end{pmatrix}}
\newcommand{\cM}{\mathcal M}
\newcommand{\sddots}{\mathinner{\mkern1mu\raise1pt\hbox{.}\mkern2mu
\raise4pt\hbox{.}\mkern2mu\raise7pt\hbox{.}\mkern1mu}}

\begin{document}

\title [Exterior Square Functional Equation]{{The functional equation of the Jacquet-Shalika integral representation of the local exterior-square $L$-function}}
\author{James W. Cogdell}
\address{Department of Mathematics, Ohio State University, Columbus OH 43210, USA}
\email{cogdell@math.osu.edu}
\thanks{JWC was partially supported by the NSF through grant  DMS-0968505.} 
\author{ Nadir Matringe}
\address{Universit\'e de Poitiers, Laboratoire de Math\'ematiques et Applications,
T\'el\'eport 2 - BP 30179, Boulevard Marie et Pierre Curie, 86962, Futuroscope Chasseneuil Cedex.}
\email{Nadir.Matringe@math.univ-poitiers.fr}
\thanks{NM was partially supported by the research project ANR-13-BS01-0012 FERPLAY}
\begin{abstract}

An integral representation for the exterior square $L$-function for $GL_n$  was given by Jacquet and Shalika in 1990.  Recently there has been renewed interest in both the local and global theory of the exterior square $L$-function via this integral representation.  In an earlier work,  the second author used his results on the connection between linear periods and Shalika periods to analyze the local  exterior square $L$-functions via Bernstein-Zelevinsky derivatives and prove the local functional equation in the case of $GL_{2m}(F)$, for $F$ a nonarchimedean local field. In this paper we complete this work   and derive the local functional equation for the exterior square $L$-function for $GL_{2m+1}(F)$ by similar methods, and extending the functional equation in both cases to non-generic representations. With these results, we have  the local functional equation of the exterior square $L$-function  for irreducible admissible representations of $GL_n(F)$, for any $n$, for use in future applications.
 \end{abstract}
\maketitle

\section{Introduction}

An integral representation for the exterior square $L$-function for $GL_n$  was given by Jacquet and Shalika in 1990 \cite{JS}. In the mid 1990's the first author, with Piatetski-Shapiro, embarked on the local analysis of the exterior square $L$-function via this integral representation in conjunction with their project to establish functoriality from $SO_{2n+1}$ to $GL_{2n}$ via the converse theorem and integral representations \cite{CP}. The approach there was by the Bernstein-Zelevinsky theory of derivatives as in \cite{CP2}. This was set aside and never published, other than \cite{CP}. 

Recently there has been renewed interest in the local and global theory of the exterior square $L$-function via this integral representation \cite{B, K, KR}. In particular, in \cite{M2} the second author used his results on the connection between linear periods \cite{M} and Shalika periods to analyze the local exterior square $L$-functions via Bernstein-Zelevinsky derivatives and prove the local functional equation in the case of $GL_{2m}$. This approach seems simpler than the approach used in \cite{CP}. 

In this paper we complete the work in \cite{M2} and derive the local functional equation for the exterior square $L$-function for $GL_{2m+1}$. In their original paper, Jacquet and Shalika considered the odd case only briefly in their last section, Section 9. 
We deduce the shape of the local functional equation from the global one in \cite{JS} and  then prove it using a purely local approach. As in \cite{M2}, this is based on the   Bernstein-Zelevinsky theory of derivatives and the theory of linear periods \cite{JR} extended to the odd case. Our method allows us to extend our results, and those of \cite{M2},  to any irreducible admissible representation of $GL_n$ via the use of representations of Whittaker type. We should point out that our version of the global and local functional equation in the odd case is different from that given by Kewat and Raghunathan in \cite{KR}; we will address this discrepancy in the last section of the paper.

The local functional equation of
the exterior square $L$-function is now available for irreducible representations
of $GL_n$, for any $n$. We will use these local functional equations in the future to prove the inductivity, or multiplicativity, of the local exterior square $L$-function and $\gamma$-factor, and then complete the local nonarchimedean theory of the exterior square $L$-function at the ramified places.

We should point out that the exterior square $L$-function is available from the Langlands-Shahidi method \cite{Sha} and the main result of \cite{KR} is that for discrete series representations the $L$-functions from the Langlands-Shahidi method and the integral representation of Jacquet and Shalika agree. 

We would like to take this opportunity to thank the referee for several comments and suggestions that improved the overall exposition of the paper.
\section{Preliminaries}

Let $F$ be a nonarchimedean local field, with ring of integers $\mathfrak O$, prime ideal $\mathfrak P$,  and fix a uniformizer $\varpi$ so that $\mathfrak P=(\varpi)$. Let $q=|\mathfrak O/\mathfrak P|$ denote the cardinality of the residue class field. We let $val:F^\times\rightarrow \mathbb Z$ be the associated discrete valuation with $val(\varpi)=1$ and normalize the absolute value so that $|a|=q^{-val(a)}$.

Let $\mathcal M_k$ denote the algebra of $k\times k$ square matrices with entries in $F$ and $\mathcal M_{a,b}$ the $a\times b$ matrices with entries in $F$.

 We denote $GL_n(F)$ by $G_n$ for $n\geq 1$. We will denote $|\det(g)|$ by $|g|$ for a matrix in $G_n$. The group $N_n$ will be the unipotent radical of the standard Borel subgroup $B_n$ of $G_n$ given by upper triangular matrices. 
For $n\geq2$ we denote by $U_n$ the group of matrices $u(x)=\begin{pmatrix} 
                                                                                         I_{n-1}    & x\\
                                                                                                    & 1 \end{pmatrix}$ 
  for $x$ in $F^{n-1}$.

For $n> 1$, the map $g\mapsto \begin{pmatrix} g & \\ & 1 \end{pmatrix}$ is an embedding of the group $G_{n-1}$ in $G_{n}$. We denote by $P_n$ the subgroup $G_{n-1}U_n$ of $G_n$. This is the mirabolic subgroup of $G_n$. We fix a nontrivial additive character $\theta$ of $F$, and denote by $\theta$ again the character 
\[
n\mapsto \theta\left(\sum_{i=1}^{n-1}n_{i,i+1}\right)
\]
 of $N_n$.
The normalizer of $\theta_{|U_n}$ in $G_{n-1}$ is then $P_{n-1}$. 

Suppose $n=2m$ is even.  Let $\sigma_n\in G_n$ be the permutation matrix for the permutation given by
\[
\sigma_n=\begin{pmatrix} 1 & 2 & \cdots & m & | & m+1 & m+2 & \cdots & 2m \\ 
                1 & 3 & \cdots & 2m-1 & | & 2 & 4 & \cdots & 2m
                \end{pmatrix}.
\]
In this case we denote by  $M_n$ the standard Levi of $G_n$ associated to the partition $(m,m)$ of $n$.  Let $w_n=\sigma_n$ and then let $H_n=w_n M_n w_n^{-1}$.

Suppose $n=2m+1$ is odd. In this case we let $\sigma_n$ be the permutation matrix in $G_n$ associated to the permutation
\[
\sigma_{2m+1}=\begin{pmatrix} 1 & 2 & \cdots & m & | & m+1 & m+2 & \cdots &  2m & 2m+1 \\ 
                1 & 3 & \cdots & 2m-1 & | & 2 & 4 & \cdots &  2m & 2m+1
                \end{pmatrix},
\]
so that $\sigma_{2m}=\sigma_{2m+1}|_{G_{2m}}$ and let $w_{2m+1}=w_{2m+2}|_{GL_{2m+1}}$ so that
\[
w_{2m+1}=\begin{pmatrix} 1 & 2 & \cdots & m+1 & | & m+3 & m+4 & \cdots &  2m+1 \\ 
                1 & 3 & \cdots & 2m+1 & | & 2 & 4 & \cdots &  2m-2
                \end{pmatrix}.
\]
In the odd case, $\sigma_{2m+1}\neq w_{2m+1}$. We let $M_n$ denote the standard parabolic associated to the partition $(m+1,m)$ of $n$ and set $H_n=w_n M_n w_n^{-1}$ as in the even case.

Note that the $H_n$ are compatible in the sense that $H_n\cap G_{n-1}=H_{n-1}$. 

 We will need the work of Bernstein and Zelevinsky concerning the classification of irreducible representations of $G_n$. We first define the following functors following \cite{BZ}:

\begin{itemize}

\item The functor $\Phi^{+}$ from $Alg(P_{k-1})$ to $Alg(P_{k})$ such that, for $\pi$ in $Alg(P_{k-1})$, one has
$\Phi^{+} \pi = ind_{P_{k-1}U_k}^{P_k}(\delta_{U_k}^{1/2}\pi \otimes \theta)$.
\item[]
\item The functor $\Psi^{+}$ from $Alg(G_{k-1})$ to $Alg(P_{k})$, such that for $\pi$ in $Alg(G_{k-1})$, one has
$\Psi^{+} \pi = ind_{G_{k-1}U_k}^{P_k}(\delta_{U_k}^{1/2}\pi \otimes 1)=\delta_{U_k}^{1/2}\pi \otimes 1 $. (Note that in this case $P_k=G_{k-1}U_k$, so the induction itself is trivial.)

\end{itemize}

We recall the following proposition which follows from Propositions $3.1$ and $3.2$ of \cite{M} (in which one has injections instead of isomorphisms, but they are actually isomorphisms):

\begin{prop}\label{rec}
Let $\sigma$ belong to $Alg(P_{n-1})$, and $\chi$ be a character of $P_n\cap H_n$. Then there is a character $\chi'$ of 
$P_{n-1}\cap H_{n-1}$, independent of $\sigma$, such that 
\[
Hom_{P_n\cap H_n}(\Phi^+ \sigma,\chi)\simeq Hom_{P_{n-1}\cap H_{n-1}}(\sigma,\chi\chi').
\]
\end{prop}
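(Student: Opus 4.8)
The plan is to compute $\Phi^+\sigma = \mathrm{ind}_{P_{n-1}U_n}^{P_n}(\delta_{U_n}^{1/2}\sigma\otimes\theta)$ explicitly as an induced representation and then apply Frobenius reciprocity, so that the Hom space becomes a space of functions on the double coset space $(P_{n-1}U_n)\backslash P_n / (P_n\cap H_n)$ satisfying appropriate equivariance on both sides. First I would analyze this double coset space. The key geometric fact is that $P_n\cap H_n$ acts on $P_{n-1}U_n\backslash P_n$, and one should check there is a \emph{single open orbit} whose contribution, after untwisting, is exactly $\mathrm{Hom}_{P_{n-1}\cap H_{n-1}}(\sigma,\chi\chi')$ for a suitable modulus-type character $\chi'$ coming from $\delta_{U_n}^{1/2}$ together with the modulus characters produced by the Mackey/Frobenius formalism; the stabilizer of a point in the open orbit should be conjugate to $P_{n-1}\cap H_{n-1}$ (this uses the compatibility $H_n\cap G_{n-1}=H_{n-1}$ and $H_n\cap P_n$ being understood), and the restriction of $\theta$ to the relevant unipotent piece should match up. Then I would show that all the other orbits contribute zero: on a non-open orbit the stabilizer contains a unipotent subgroup on which $\theta$ is nontrivial while $\chi$ (being a character of $P_n\cap H_n$) is trivial, forcing the corresponding Hom space of the (sub)quotient to vanish.

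More concretely, I would filter $\Phi^+\sigma$ by $P_n\cap H_n$-subrepresentations indexed by the orbit stratification — closed orbits giving subrepresentations and the open orbit giving the quotient (or the reverse, depending on orientation), using that $\mathrm{Hom}_{P_n\cap H_n}(-,\chi)$ is left exact and that the Jacquet–Zelevinsky geometric lemma / the exactness of $\mathrm{ind}$ in this $\ell$-group setting lets one pass Hom through the filtration. For the open orbit one uses $\mathrm{Hom}_{P_n\cap H_n}\big(\mathrm{ind}_{S}^{P_n\cap H_n}(\text{something}),\chi\big)\simeq \mathrm{Hom}_S(\text{something},\chi|_S)$ where $S$ is the stabilizer, and identifies $S$ with (a conjugate of) $P_{n-1}\cap H_{n-1}$; the twisting characters collapse into the single $\chi'$, which is manifestly independent of $\sigma$ since it only depends on $\delta_{U_n}$, the group-theoretic moduli, and the conjugating element. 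For the closed orbits I would exhibit, in the stabilizer, a one-parameter unipotent subgroup $\{u(t)\}$ on which $\theta$ restricts nontrivially but on which any character of $P_n\cap H_n$ restricts trivially (because $P_n\cap H_n$ is, up to conjugacy, a Levi-type subgroup with unipotent radical on which characters die), yielding vanishing. This is essentially the argument behind Propositions $3.1$ and $3.2$ of \cite{M}, so I would either cite those directly or reprove the needed orbit computation in the present normalization.

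The main obstacle I anticipate is the explicit \emph{orbit analysis of $(P_{n-1}U_n)\backslash P_n/(P_n\cap H_n)$}, i.e.\ identifying all the double cosets and, for each, computing the stabilizer together with the restriction of the character $\delta_{U_n}^{1/2}\otimes\theta$ to it. This is a concrete but delicate linear-algebra computation with the permutation matrices $\sigma_n, w_n$ defining $H_n$, and it must be done uniformly (or in the two parities separately, even vs.\ odd $n$) — in the odd case $\sigma_{2m+1}\neq w_{2m+1}$, so $H_n$ sits slightly differently and one has to be careful that the open orbit still has stabilizer $P_{n-1}\cap H_{n-1}$ and that the leftover unipotent direction on closed orbits genuinely pairs nontrivially with $\theta$. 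Once the open orbit is pinned down, extracting the character $\chi'$ is a bookkeeping exercise: $\chi'$ is the product of $\delta_{U_n}^{1/2}$ restricted via the orbit representative, the ratio of modulus characters $\delta_{P_n\cap H_n}/\delta_{S}$ (or the appropriate unnormalized-induction correction), all transported to $P_{n-1}\cap H_{n-1}$. I would close by remarking that $\chi'$ depends only on $n$ and the fixed data, not on $\sigma$ or even on $\chi$, which is what makes the recursion in the later sections go through.
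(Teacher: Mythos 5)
Your general framework (restrict the compactly induced $\Phi^+\sigma$ to $P_n\cap H_n$, stratify by orbits on $P_{n-1}U_n\backslash P_n$, use Frobenius reciprocity on the contributing orbit and a $\theta$-versus-$\chi$ incompatibility on the rest, with $\chi'$ coming from $\delta_{U_n}^{1/2}$ and modulus corrections) is indeed the argument behind Propositions 3.1 and 3.2 of [M], which is all the paper itself invokes. But your identification of the orbits is wrong at the decisive point. The coset space is $P_{n-1}U_n\backslash P_n\simeq P_{n-1}\backslash G_{n-1}\simeq F^{n-1}\setminus\{0\}$ (row vectors), and $P_n\cap H_n=H_{n-1}(U_n\cap H_n)$ acts through $H_{n-1}$, which preserves the splitting of $F^{n-1}$ into the two scrambled coordinate subspaces. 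There are three orbits: the open one (both projections nonzero) and the two punctured subspaces, which are closed. The orbit carrying the answer is the orbit of the identity coset, i.e. of $e_{n-1}$; since $n-1$ and $n$ lie in different blocks (in both parities), $e_{n-1}$ sits inside one coordinate subspace, so this orbit is one of the \emph{closed} orbits, with stabilizer $(P_{n-1}\cap H_{n-1})(U_n\cap H_n)$, and $\theta$ restricts trivially to $U_n\cap H_n$ there. By contrast, the stabilizer of a point of the open orbit is of mirabolic-times-mirabolic type, which is not conjugate to $P_{n-1}\cap H_{n-1}$ (it has the wrong size), and it is precisely the open orbit (together with the other closed orbit, the one inside the block of $n$) that is killed by your vanishing mechanism: every stabilizer contains $U_n\cap H_n$, the conjugated inducing character acts on it by $u(y)\mapsto\theta(v\cdot y)$, and this is nontrivial exactly when the orbit representative $v$ has a nonzero component in the block of $n$, while $\chi$ is necessarily trivial on $U_n\cap H_n$. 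So you have the roles of the open and non-open orbits interchanged.

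This is not merely cosmetic, because it interacts with the injection-versus-isomorphism issue. In the Bernstein--Zelevinsky stratification, sections supported on the open orbit form a subrepresentation and the closed orbits contribute quotients. With your orientation (contributing orbit open), vanishing of Hom on the other strata would only give an injection $Hom_{P_n\cap H_n}(\Phi^+\sigma,\chi)\hookrightarrow Hom_{P_{n-1}\cap H_{n-1}}(\sigma,\chi\chi')$, and the surjectivity (an extension problem across the filtration) is never addressed in your sketch; note the paper explicitly flags that [M] only states injections and that upgrading them is the point. With the correct orientation, the surviving stratum is a quotient, so left-exactness of $Hom(-,\chi)$ together with the vanishing of the open-orbit piece and of the other closed orbit yields the isomorphism at once, with $\chi'$ collected from $\delta_{U_n}^{1/2}$ and the induction modulus characters exactly as you describe. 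Redo the orbit computation with $e_{n-1}$ as base point and the argument closes; as written, the key geometric claims would fail.
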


As a corollary we have  the following.

\begin{cor}\label{homsteak}
Let $n=2m+1$ be an odd integer. Let $\rho$ be an irreducible representation of $G_{k}$ for $k\leq n-1$, and $\chi$ be a character of $H_n\cap P_n$. Then
\[
Hom_{H_n\cap P_n}((\Phi^+)^{n-k-1}\Psi^+(\rho),\chi)\simeq Hom_{H_k}(\rho,\chi\mu_n^k)
\]
 for a character $\mu_n^k$ of $H_k$ independent of $\rho$.
\end{cor}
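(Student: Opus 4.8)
The plan is to prove this by descending induction on $k$, going from $k = n-1$ down to the desired value, using Proposition \ref{rec} at each step to peel off one copy of $\Phi^+$. First I would observe that the representation $(\Phi^+)^{n-k-1}\Psi^+(\rho)$ is built by applying $\Psi^+$ once to the $G_k$-representation $\rho$, landing in $Alg(P_{k+1})$, and then applying $\Phi^+$ a total of $n-k-1$ times to climb up to $Alg(P_n)$. So the natural strategy is: apply Proposition \ref{rec} repeatedly. Writing $\sigma_j = (\Phi^+)^{j - k - 1}\Psi^+(\rho) \in Alg(P_j)$ for $k+1 \le j \le n$, Proposition \ref{rec} gives, for each $j$ from $n$ down to $k+2$, an isomorphism
\[
Hom_{P_j \cap H_j}(\Phi^+ \sigma_{j-1}, \chi_j) \simeq Hom_{P_{j-1}\cap H_{j-1}}(\sigma_{j-1}, \chi_j \chi_j')
\]
with $\chi_j'$ a character of $P_{j-1}\cap H_{j-1}$ independent of $\sigma_{j-1}$, hence independent of $\rho$. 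Composing these $n-k-1$ isomorphisms and collecting the characters into a single character $\nu$ of $P_{k+1}\cap H_{k+1}$ (again independent of $\rho$) yields
\[
Hom_{P_n\cap H_n}((\Phi^+)^{n-k-1}\Psi^+(\rho), \chi) \simeq Hom_{P_{k+1}\cap H_{k+1}}(\Psi^+(\rho), \chi \nu).
\]

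It then remains to handle the bottom step, where $\Psi^+$ rather than $\Phi^+$ appears. Here I would use that $\Psi^+(\rho) = \delta_{U_{k+1}}^{1/2}\rho \otimes 1$ is just $\rho$ inflated from $G_k$ to $P_{k+1} = G_k U_{k+1}$, twisted by the modulus character. The key geometric input is the compatibility $H_n \cap G_{n-1} = H_{n-1}$ recorded in the excerpt, which iterates to $H_{k+1}\cap G_k = H_k$; together with the fact that $U_{k+1}$ is contained in $P_{k+1}$ and meets $H_{k+1}$ in a subgroup on which the relevant restriction is understood, this lets one compute $Hom_{P_{k+1}\cap H_{k+1}}(\Psi^+(\rho), \chi\nu)$ by restricting along $H_k \hookrightarrow P_{k+1}\cap H_{k+1}$. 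One checks that $\Psi^+(\rho)$ restricted to $P_{k+1}\cap H_{k+1}$ has $\rho$ restricted to $H_k$ as a quotient (or the space of $(\chi\nu)$-equivariant functionals factors through evaluation on the $G_k$-part), absorbing the modulus factor $\delta_{U_{k+1}}^{1/2}$ and the $U_{k+1}\cap H_{k+1}$-contribution into a final character twist. Setting $\mu_n^k$ to be the restriction to $H_k$ of the accumulated character $\nu$ corrected by this last twist gives
\[
Hom_{P_{k+1}\cap H_{k+1}}(\Psi^+(\rho), \chi\nu) \simeq Hom_{H_k}(\rho, \chi\mu_n^k),
\]
and $\mu_n^k$ depends only on $n$ and $k$, not on $\rho$, since every character introduced along the way was $\rho$-independent.

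The main obstacle I anticipate is the bottom step: verifying carefully that passing from $P_{k+1}\cap H_{k+1}$ down to $H_k$ genuinely identifies the Hom-spaces rather than merely producing an injection, and tracking exactly which modulus characters and which components of $U_{k+1}\cap H_{k+1}$ contribute to $\mu_n^k$. The iterated application of Proposition \ref{rec} is essentially formal once one knows the $\sigma_j$ are legitimate objects of $Alg(P_j)$ and that $\Phi^+$ commutes with the bookkeeping; the delicate point is the interface between the $\Psi^+$-level and genuine $G_k$-representation theory, where one must use that $\rho$ is irreducible (so that $Hom_{H_k}(\rho, \cdot)$ behaves well) and the precise structure of $H_{k+1}\cap P_{k+1}$ relative to $H_k$. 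I would also double-check the parity/structure hypotheses: the statement restricts to $n = 2m+1$ odd, which dictates the specific form of $w_n$ and hence of $H_n$, so the computation of $H_{k+1}\cap P_{k+1}$ and the characters $\chi_j'$ must be done with the odd-case definitions in force.
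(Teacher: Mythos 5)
Your proposal matches the paper's proof: the paper likewise iterates Proposition \ref{rec} to reduce to $Hom_{P_{k+1}\cap H_{k+1}}(\Psi^+(\rho),\chi\mu')$ for a $\rho$-independent character $\mu'$, and then observes that since $\Psi^+$ is just a twist by $|\det(\cdot)|^{1/2}$ extended trivially on $U_{k+1}$, a quasi-invariant linear form under $P_{k+1}\cap H_{k+1}$ amounts to a linear form on a character twist of $\rho$ quasi-invariant under $H_k$, using $H_{k+1}\cap G_k=H_k$. Your extra care at the bottom step (tracking the modulus factor and the $U_{k+1}\cap H_{k+1}$-contribution) is exactly the content the paper dispatches as "straightforward," so the two arguments are essentially identical.
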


\begin{proof} By the previous propositions we have
\[
Hom_{H_n\cap P_n}((\Phi^+)^{n-k-1}\Psi^+(\rho),\chi)\simeq Hom_{P_{k+1}\cap H_{k+1}}(\Psi^+(\rho),\chi\mu')
\]
for an appropriate character $\mu'$. $\Psi^+$ is just twisting by $|\det(\cdot)|^{1/2}$, and then extending a representation of $H_k$ to $P_{k+1}$ by $1$ on $U_{k+1}$, so it is quite straight forward that a linear form on $\Psi^+(\rho)$ quasi-invariant under $P_{k+1}\cap H_{k+1}$ is just a linear form on a twist of $\rho$ by a character, quasi-invariant under $H_k$.
\end{proof}

\section{The functional equation of the local exterior square $L$-function, when $n$ is odd}

In this section, $n=2m+1$ is odd.

\subsection{An action of the Shalika subgroup on $\mathcal{C}_c^\infty(F^m)$}

We consider the Shalika subgroup $S_n$ of $G_n$:
 \[
 S_n=\left\{\begin{pmatrix} g & z & y \\ & g &  \\ & x &1 \end{pmatrix}\big| g\in G_m,\  x\in \mathcal{M}_{1,m},\  y\in \mathcal{M}_{m,1},\ z\in\mathcal{M}_m\right\}.
\]
 We recall that 
$$\Theta\left(\begin{pmatrix} I_m & z &  \\ & I_m &  \\ &  &1 \end{pmatrix}\begin{pmatrix} g &  &  \\ & g &  \\ &  &1 \end{pmatrix}
\begin{pmatrix} I_m &  & y \\ & I_m &  \\ &  &1 \end{pmatrix}\right)=\theta(Tr(z))$$ defines a character of $P_n\cap S_n$. 
We claim that $S_n$ admits a certain linear representation on the space $\mathcal{C}_c^\infty(F^m)$.
In the following  we view $x\in F^m\simeq \cM_{1,m}$ as a row vector so, for $g\in G_m$,  $xg$ is simply matrix multiplication and for $y_0$ a column vector in $F^m\simeq \cM_{m,1}$ we set $\langle x, y_0\rangle=xy_0$, again matrix multiplication.

\begin{prop}\label{representation of $S_n$}
There is a linear representation $R_\theta$ of $S_n$ on the space $\mathcal{C}_c^\infty(F^m)$, such that:
\begin{itemize}
\item{} $R_\theta \begin{pmatrix} g &  &  \\ & g &  \\ &  & 1 \end{pmatrix} \phi(x)= \phi(xg)$.
\item{} $R_\theta \begin{pmatrix} I_m & z_0 &  \\ & I_m &  \\ &  & 1 \end{pmatrix} \phi(x)= \theta(Tr(-z_0))\phi(x)$.
\item{} $R_\theta \begin{pmatrix} I_m &  & y_0 \\ & I_m &  \\ &  & 1 \end{pmatrix} \phi(x)= \theta(\langle x ,y_0\rangle )\phi(x)$.
\item{} $R_\theta \begin{pmatrix} I_m &  &  \\ & I_m &  \\ & x_0 & 1 \end{pmatrix} \phi(x)= \phi(x+x_0)$;
\end{itemize}
in fact, $R_\theta$ is simply the model of $ind_{P_n\cap S_n}^{S_n}(\Theta^{-1})$, given by the restriction 
$$f\in ind_{P_n\cap S_n}^{S_n}(\Theta^{-1})\mapsto \phi \in \mathcal{C}_c^\infty(F^m),$$ where 
$\phi(x)=f \begin{pmatrix} I_m &  &  \\ & I_m &  \\ & x & 1 \end{pmatrix}$.
\end{prop}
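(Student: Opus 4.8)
The plan is to take $R_\theta$ to be the representation $\mathrm{ind}_{P_n\cap S_n}^{S_n}(\Theta^{-1})$ itself, transported to $\mathcal{C}_c^\infty(F^m)$ through the restriction map in the statement. With this choice existence is automatic, and everything reduces to evaluating the right translation action on a few generating families of elements of $S_n$.

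First I would record the exact factorization of $S_n$ that underlies the model. Write $\overline{U}_n=\{\,\overline{u}(x_0):=\begin{pmatrix} I_m & & \\ & I_m & \\ & x_0 & 1\end{pmatrix}\mid x_0\in F^m\,\}$, an abelian subgroup isomorphic to $F^m$. A direct block computation gives
\[
\begin{pmatrix} g & z & y \\ & g & \\ & x & 1\end{pmatrix}=\begin{pmatrix} g & z-yx & y \\ & g & \\ & 0 & 1\end{pmatrix}\overline{u}(x),
\]
with the first factor in $P_n\cap S_n$, and $(P_n\cap S_n)\cap\overline{U}_n=\{1\}$; hence the product map $(P_n\cap S_n)\times\overline{U}_n\to S_n$ is a bijection (an isomorphism of $p$-adic analytic manifolds), so $(P_n\cap S_n)\backslash S_n\cong\overline{U}_n\cong F^m$ via $(P_n\cap S_n)\overline{u}(x_0)\mapsto x_0$. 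Consequently the restriction map $f\mapsto\bigl(x_0\mapsto f(\overline{u}(x_0))\bigr)$ is a linear isomorphism of $\mathrm{ind}_{P_n\cap S_n}^{S_n}(\Theta^{-1})$ onto $\mathcal{C}_c^\infty(F^m)$: any smooth compactly supported $\phi$ on $F^m$ extends uniquely to an element of the induced space because of the factorization. Transporting right translation through this isomorphism defines $R_\theta$; it then remains only to compute it on the four displayed types of elements.

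For $s$ in each family I would use the factorization above to write $\overline{u}(x_0)\,s=p(s,x_0)\,\overline{u}(x_1)$ with $p(s,x_0)\in P_n\cap S_n$ and $x_1\in F^m$, so that $\bigl(R_\theta(s)\phi\bigr)(x_0)=\Theta^{-1}\bigl(p(s,x_0)\bigr)\,\phi(x_1)$, and then evaluate $\Theta$ by its defining formula. For $s=\diag(g,g,1)$ one gets $p=s$, $\Theta^{-1}(p)=1$, $x_1=x_0g$, hence $\phi(x_0g)$. For the $z_0$-unipotent, $\overline{u}(x_0)$ commutes with $s$, so $p=s$, $x_1=x_0$, and $\Theta(s)=\theta(\mathrm{Tr}(z_0))$, hence $\theta(-\mathrm{Tr}(z_0))\phi(x_0)$. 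For the $y_0$-unipotent, $p=\begin{pmatrix} I_m & -y_0x_0 & y_0 \\ & I_m & \\ & 0 & 1\end{pmatrix}$, $x_1=x_0$, and $\Theta(p)=\theta(-\mathrm{Tr}(y_0x_0))=\theta(-\langle x_0,y_0\rangle)$ since $\mathrm{Tr}(y_0x_0)=x_0y_0=\langle x_0,y_0\rangle$, hence $\theta(\langle x_0,y_0\rangle)\phi(x_0)$. For $s=\overline{u}(x_0')$, $p=1$ and $x_1=x_0+x_0'$, hence $\phi(x_0+x_0')$. Each of these agrees with the asserted formula, and I would finish by remarking that these four families generate $S_n$, so the four identities in fact characterize $R_\theta$.

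The whole argument is bookkeeping with $(2m+1)\times(2m+1)$ block matrices, so there is no real obstacle. The one step requiring a little care is the $y_0$-unipotent case: rewriting $\overline{u}(x_0)\,s$ in $(P_n\cap S_n)\overline{U}_n$-form produces the rank-one matrix $-y_0x_0$ in the $z$-slot, and one must recognise its trace as $\langle x_0,y_0\rangle$ and track the signs correctly. One should also be sure the induction is taken unnormalized (and with compact support), with no $\delta$-twist, since otherwise spurious modulus factors would enter the formulas.
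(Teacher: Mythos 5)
Your proposal is correct and follows exactly the route the paper intends: the paper's proof simply says "one checks that this is the model of $ind_{P_n\cap S_n}^{S_n}(\Theta^{-1})$ given by the restriction map," and your argument carries out precisely that check, via the factorization $S_n=(P_n\cap S_n)\overline{U}_n$ and the evaluation of $\Theta^{-1}$ on the four generating families (all computations, including the sign and the identity $Tr(y_0x_0)=\langle x_0,y_0\rangle$, are correct).
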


\begin{proof} One just checks that this is indeed the model of $ind_{P_n\cap S_n}^{S_n}(\Theta^{-1})$ given by the restriction map defined in the statement 
above.
\end{proof}

Let $\tau=\tau_n$ be the matrix $\begin{pmatrix}  & I_m &  \\ I_m &  &  \\ &  & 1 \end{pmatrix}$. For $h\in G_n$, we denote by $h^\tau$ 
the matrix $\tau h\tau^{-1}$. One can check, using the generators of $S_n$ given in Proposition \ref{representation of $S_n$}, that the map $s\mapsto {}^t(s^{-1})^{\tau}={}^t\!s^{-\tau}$ defines an automorphism of the group $S_n$. 
 





\begin{prop}\label{fourier}
For $\phi$ in $\mathcal{C}^\infty_c(F^m)$, we denote by 
\[
\widehat{\phi}(y)=\int_{u\in F^m} \phi(u)\theta^{-1}(\langle u,y\rangle)du
\] for $du$ such that the Fourier inversion 
formula holds. We denote by $\mathcal{F}$ the operator $\phi\mapsto \widehat{\phi}$ on $\mathcal{C}_c^\infty(F^m)$. Then it satisfies 
\[
\mathcal{F}(R_\theta(s)\phi)=|s|^{-1/2}R_{\theta^{-1}}({}^t\!s^{-\tau})\mathcal{F}(\phi).
\]
\end{prop}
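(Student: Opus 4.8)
The plan is to exploit the fact that both sides of the stated identity are, as functions of $s$, assembled from representations of $S_n$; the identity is then automatically preserved under products and inverses, so it will be enough to verify it on a generating set of $S_n$, and Proposition~\ref{representation of $S_n$} furnishes four such families of elements.

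To make this precise, recall that $s\mapsto{}^t\!s^{-\tau}$ is an automorphism of $S_n$ (noted just after Proposition~\ref{representation of $S_n$}) and that $s\mapsto|s|^{-1/2}$ is a character of $S_n$; hence $T(s):=|s|^{-1/2}R_{\theta^{-1}}({}^t\!s^{-\tau})$, being a character times the representation $R_{\theta^{-1}}$ precomposed with an automorphism, is itself a representation of $S_n$ on $\mathcal{C}_c^\infty(F^m)$. Since $\mathcal{F}$ is invertible on $\mathcal{C}_c^\infty(F^m)$ — the measure $du$ is normalized precisely so that the Fourier inversion formula holds — the asserted identity $\mathcal{F}\circ R_\theta(s)=T(s)\circ\mathcal{F}$ is equivalent to $\mathcal{F}R_\theta(s)\mathcal{F}^{-1}=T(s)$. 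Both $s\mapsto\mathcal{F}R_\theta(s)\mathcal{F}^{-1}$ and $s\mapsto T(s)$ are homomorphisms from $S_n$ to the group of invertible operators on $\mathcal{C}_c^\infty(F^m)$, so the locus of $s$ where they coincide is a subgroup; as $S_n$ is generated by the four families of elements in Proposition~\ref{representation of $S_n$}, it suffices to check the identity on those.

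Then I would run through the four cases; only the first goes beyond elementary Fourier analysis over $F^m$. For $s=\diag(g,g,1)$ one has $|s|=|\det g|^2$ and ${}^t\!s^{-\tau}=\diag({}^t\!g^{-1},{}^t\!g^{-1},1)$, since conjugation by $\tau$ only swaps the two equal $m\times m$ blocks; on the other hand, the substitution $u\mapsto ug$ in $\mathcal{F}(R_\theta(s)\phi)(y)=\int_{u\in F^m}\phi(ug)\theta^{-1}(\langle u,y\rangle)\,du$ produces a Jacobian $|\det g|^{-1}$ and replaces $\langle u,y\rangle$ by $\langle ug,g^{-1}y\rangle$, and the surviving $|\det g|^{-1}$ is exactly $|s|^{-1/2}$. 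For the $z_0$-type element, $R_\theta(s)$ is the scalar $\theta(Tr(-z_0))$, which commutes with $\mathcal{F}$; here ${}^t\!s^{-\tau}$ is of the same type with parameter $-{}^t\!z_0$, so $R_{\theta^{-1}}({}^t\!s^{-\tau})$ is the scalar $\theta^{-1}(Tr(z_0))=\theta(Tr(-z_0))$, and $|s|=1$. Finally, the automorphism $s\mapsto{}^t\!s^{-\tau}$ interchanges the $y_0$-type and $x_0$-type subgroups (up to a transpose), and on these the identity is just the classical fact that $\mathcal{F}$ exchanges multiplication by $x\mapsto\theta(\langle x,y_0\rangle)$ with translation by $y_0$, and translation by $x_0$ with multiplication by $y\mapsto\theta(\langle x_0,y\rangle)$; again $|s|=1$, so $|s|^{-1/2}$ is inert.

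The one case that requires care — and the main place an error could hide — is $s=\diag(g,g,1)$, the unique generator with nontrivial determinant: the whole factor $|s|^{-1/2}$ must be produced here, and one has to make sure that it is $|\det g|^{-1}$ that is left over after the change of variables and not $|\det g|^{-2}$, i.e. that the substitution $u\mapsto ug$ contributes the Jacobian $|\det g|^{-1}$ and not $|\det g|$. One should also fix at the outset the identification of $\widehat\phi$ with an element of $\mathcal{C}_c^\infty(F^m)$ through the pairing $\langle\cdot,\cdot\rangle$ (equivalently, the identification of $F^m$ with its dual by transposing rows and columns): this is exactly what generates the transpose in ${}^t\!s^{-\tau}$, and once it is pinned down the four verifications are routine.
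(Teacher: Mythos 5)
Your proposal is correct and follows essentially the same route as the paper, whose proof is precisely the one-line instruction to verify the identity on the generators of $S_n$ listed in Proposition \ref{representation of $S_n$}; your reduction to generators (via the observation that $|s|^{-1/2}R_{\theta^{-1}}({}^t\!s^{-\tau})$ is again a representation and $\mathcal F$ is invertible) and your four case checks, including the Jacobian $|\det g|^{-1}=|s|^{-1/2}$ for $s=\mathrm{diag}(g,g,1)$ and the transpose bookkeeping coming from the identification of $F^m$ with its dual through $\langle\cdot,\cdot\rangle$, are exactly the computations the paper leaves to the reader.
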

\begin{proof}
One checks this on the generators given in Proposition \ref{representation of $S_n$}.
\end{proof}

\subsection{The integral representation for the exterior square $L$-function}
Let $\pi$ be an irreducible admissible representation of $G_n$. If $\pi$ is generic, we let $\mathcal W(\pi,\theta)$ denote its Whittaker model; if not, then $\pi$ is an irreducible quotient of an induced representation $\Xi$ of Langlands type which has a Whittaker model and we set $\mathcal W(\pi,\theta)=\mathcal W(\Xi,\theta)$ \cite{JS83}. Following Section 9 of \cite{JS} we now define two families of integrals, for $W$ 
in $\mathcal W(\pi,\theta)$, $\phi$ in $\mathcal{C}_c^\infty(F^m)$, and $s$ in $\C$:
\[
J_\theta(s,W)=\int W\left(\sigma_n \begin{pmatrix} I_m & z &  \\ & I_m &  \\ &  & 1 \end{pmatrix}\begin{pmatrix} g &  &  \\ & g &  \\ &  & 1 \end{pmatrix}\right)\theta(Tr(-z))|g|^{s-1}dzdg,
\]where $g$ is integrated over 
$N_m\backslash G_m$, and $z$ over $\mathcal{N}_n\backslash \mathcal{M}_n$, where $\mathcal{N}_n$ is the space of  upper triangular 
matrices, and 
\[
J_\theta(s,W,\phi)=J_\theta(s,\rho(\phi)W),
\]
 where   
\[
\rho(\phi)W(g)=\int_{ F^m}W\left(g\begin{pmatrix} I_m &   &  \\ & I_m &  \\ & x & 1 \end{pmatrix}\right)\phi(x)dx.
\]
 Notice that 
in fact, $\rho(\phi)W$ is a finite sum of right translates of $W$.

It is proved in \cite{JS} that there exists $r_\pi$ in $\R$, such that the integrals $J_\theta(s,W)$ converge for $Re(s)>r_\pi$, and that they are
in fact elements of $\C(q^{-s})$. This implies the same property for the integrals $J_\theta(s,W,\phi)$. It is moreover proved in \cite{MY} that the integrals $J_\theta(s,W)$  span a fractional ideal $J_\pi$ of $\C[q^{\pm s}]$, generated by an Euler factor $L(s,\pi,\wedge^2)$. 

\begin{LM}\label{sameideal}
The integrals $J_\theta(s,W,\phi)$ also span $J_\pi=L(s,\pi,\wedge^2)\C[q^{\pm s}]$.
\end{LM}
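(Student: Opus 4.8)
The plan is to prove the two inclusions $\{J_\theta(s,W,\phi)\} \subseteq J_\pi$ and $J_\pi \subseteq \text{span}\{J_\theta(s,W,\phi)\}$ separately. For the first inclusion, recall from the definition that $J_\theta(s,W,\phi) = J_\theta(s,\rho(\phi)W)$ and that $\rho(\phi)W$ is a finite sum of right translates of $W$; since $\mathcal{W}(\pi,\theta)$ is stable under right translation, $\rho(\phi)W$ again lies in $\mathcal{W}(\pi,\theta)$, so $J_\theta(s,\rho(\phi)W) \in J_\pi$ by the definition of $J_\pi$ as the span of the $J_\theta(s,W)$. This direction is essentially immediate.

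The substance is the reverse inclusion: I must show that every $J_\theta(s,W)$ can be realized as a finite combination of integrals $J_\theta(s,W',\phi)$. The standard device here (as in the analogous Rankin–Selberg situation, e.g.\ \cite{JS83} and the treatment in \cite{JS}) is to exploit the fact that $\rho(\phi)W$ spans a large space as $\phi$ ranges over $\mathcal{C}_c^\infty(F^m)$: concretely, the functional $W \mapsto \rho(\phi)W$ integrates $W$ against $\phi$ along the abelian unipotent subgroup $\left\{\begin{pmatrix} I_m & & \\ & I_m & \\ & x & 1\end{pmatrix}\right\}$, and by a partition-of-unity / approximate-identity argument one can choose $\phi$ concentrated near $0$ so that $\rho(\phi)W$ approximates (a scalar multiple of) $W$ itself, uniformly on compact sets. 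More precisely, since $J_\theta(s,W)$ is a convergent integral (for $\mathrm{Re}(s)$ large) whose integrand depends continuously on $W$ in the relevant topology, and since $W$ is smooth, there is a compact open subgroup fixing $W$; taking $\phi$ to be a suitably normalized characteristic function of a small neighborhood of $0 \in F^m$ gives $\rho(\phi)W = c\cdot W$ for a nonzero constant $c$ (the volume of the neighborhood), because the relevant matrices then act trivially on $W$. Hence $J_\theta(s,W) = c^{-1}J_\theta(s,W,\phi)$, and the reverse inclusion follows.

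An alternative, perhaps cleaner, route avoids any analytic estimate: one observes directly that $\rho(\mathbf{1}_{\mathfrak{P}^N \cdot (F^m)^{\text{basis}}})$ applied to $W$ — where $N$ is large enough that the corresponding unipotent matrices stabilize $W$ under right translation — simply multiplies $W$ by $\mathrm{vol}(\mathfrak{P}^{Nm})$; this is a purely algebraic statement using smoothness of $W$ and requires no convergence input beyond what is already granted. Either way, one concludes $\text{span}\{J_\theta(s,W,\phi)\} = \text{span}\{J_\theta(s,W)\} = J_\pi = L(s,\pi,\wedge^2)\C[q^{\pm s}]$.

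The main obstacle I anticipate is bookkeeping rather than conceptual: one must be careful that the operation $\rho(\phi)$ interacts correctly with the argument $\sigma_n\begin{pmatrix} I_m & z & \\ & I_m & \\ & & 1\end{pmatrix}\begin{pmatrix} g & & \\ & g & \\ & & 1\end{pmatrix}$ appearing inside $J_\theta$ — in particular that the translation variable $x$ in $\rho(\phi)W$ genuinely sits in the $U_n$-direction complementary to the domain of integration $(z,g)$, so that no hidden collision with the $N_m\backslash G_m$ or $\mathcal{N}_n\backslash\mathcal{M}_n$ integration occurs. Once that compatibility is checked, the approximate-identity argument closes the proof with no further difficulty.
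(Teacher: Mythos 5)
Your proposal is correct and matches the paper's own argument: one inclusion because $\rho(\phi)W$ is a finite sum of right translates of $W$, the other by taking $\phi$ to be the characteristic function of a small enough subgroup of $F^m$ so that, by smoothness of $\mathcal W(\pi,\theta)$, $\rho(\phi)W$ is a positive multiple of $W$ (your ``alternative, purely algebraic route'' is exactly the paper's proof; no approximate-identity analysis is needed).
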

\begin{proof}
One has $\langle J_\theta(s,W,\phi)\rangle \subset \langle J_\theta(s,W)\rangle$, as $\rho(\phi)W$ is a finite sum of right translates of $W$. Conversely, 
for $\phi$ the characteristic function of a small enough subgroup of $F^m$, the integral $J_\theta(s,W,\phi)$ becomes equal to 
a positive multiple of $J_\theta(s,W)$ by smoothness of $\mathcal W(\pi,\theta)$, hence $\langle J_\theta(s,W,\phi)\rangle \supset \langle J_\theta(s,W)\rangle$. 
\end{proof}

We now check that the integrals $J_\theta(s,W,\phi)$ define invariant bilinear forms under the group $S_n$.

\begin{LM}\label{invariantbilinear}
The map $B_{s,\pi,\theta}:(W,\phi)\mapsto J_\theta(s,W,\phi)/L(s,\pi,\wedge^2)$ defines a bilinear form on $\mathcal W(\pi, \theta)\times \mathcal{C}_c^\infty (F^m)$, 
which satisfies the relation $B_{s,\pi,\theta}(\rho(h)W,R_\theta(h)\phi)=|h|^{-s/2}B_{s,\pi,\theta}(W,\phi)$.
\end{LM}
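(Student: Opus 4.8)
The plan is to dispose of bilinearity and well-definedness formally, and then prove the quasi-invariance by reducing it to an identity for the integrals $J_\theta(s,W,\phi)$ themselves, which is checked on a generating set of $S_n$.

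Bilinearity of $(W,\phi)\mapsto J_\theta(s,W,\phi)$ is immediate: $\rho(\phi)W$ is linear in $W$ and in $\phi$, and $W'\mapsto J_\theta(s,W')$ is linear. By Lemma \ref{sameideal} every $J_\theta(s,W,\phi)$ lies in $L(s,\pi,\wedge^2)\C[q^{\pm s}]$, so $B_{s,\pi,\theta}(W,\phi)=J_\theta(s,W,\phi)/L(s,\pi,\wedge^2)$ is a Laurent polynomial in $q^{-s}$, in particular defined for every $s$, and still bilinear. Since $L(s,\pi,\wedge^2)$ is independent of $h$, the asserted relation is equivalent to $J_\theta(s,\rho(h)W,R_\theta(h)\phi)=|h|^{-s/2}J_\theta(s,W,\phi)$ in $\C(q^{-s})$, and it suffices to prove this for $Re(s)$ large, where all the integrals below converge absolutely: $\rho(h)W\in\mathcal W(\pi,\theta)$, $R_\theta(h)\phi\in\mathcal{C}_c^\infty(F^m)$, so the convergence range of \cite{JS} applies, and since $\rho(\phi)W$ is a finite combination of right-translates of $W$ we may freely interchange the orders of integration. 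Finally $h\mapsto(\rho(h),R_\theta(h))$ is a representation of $S_n$ and $h\mapsto|h|^{-s/2}$ a character, so the set of $h$ for which the relation holds (for all $W,\phi$) is a subgroup; it therefore suffices to treat the four generators of $S_n$ listed in Proposition \ref{representation of $S_n$}.

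Writing $n(z)=\begin{pmatrix}I_m&z&\\&I_m&\\&&1\end{pmatrix}$, $d(g)=\diag(g,g,1)$, $u(x)=\begin{pmatrix}I_m&&\\&I_m&\\&x&1\end{pmatrix}$, $\bar n(y)=\begin{pmatrix}I_m&&y\\&I_m&\\&&1\end{pmatrix}$, one unfolds $\rho(\phi)W$ into the integrand to get, for $Re(s)\gg0$,
\[
J_\theta(s,\rho(h)W,R_\theta(h)\phi)=\int\!\int\!\int W\big(\sigma_n\,n(z)d(g)u(x)\,h\big)\,(R_\theta(h)\phi)(x)\,\theta(-Tr(z))\,|g|^{s-1}\,dx\,dz\,dg,
\]
with $g$, $z$, $x$ over the domains in the definition of $J_\theta$. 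For $h=u(x_0)$ one has $u(x)u(x_0)=u(x+x_0)$ and $(R_\theta(h)\phi)(x)=\phi(x+x_0)$, so $x\mapsto x-x_0$ recovers $J_\theta(s,W,\phi)$ and $|h|=1$. For $h=n(z_0)$, $n(z_0)$ centralizes $\{u(x):x\in F^m\}$ and $d(g)n(z_0)=n(gz_0g^{-1})d(g)$, so $n(z)d(g)u(x)n(z_0)=n(z+gz_0g^{-1})d(g)u(x)$; translating $z$ by $-gz_0g^{-1}$ and using $Tr(gz_0g^{-1})=Tr(z_0)$ cancels the factor $\theta(-Tr(z_0))$ coming from $R_\theta(n(z_0))\phi$, and $|h|=1$. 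For $h=d(g_0)$, $u(x)d(g_0)=d(g_0)u(xg_0)$ gives $n(z)d(g)u(x)d(g_0)=n(z)d(gg_0)u(xg_0)$, and combining $(R_\theta(h)\phi)(x)=\phi(xg_0)$ with the substitutions $g\mapsto gg_0^{-1}$, $x\mapsto xg_0^{-1}$ produces the factor $|g_0|^{-(s-1)}|g_0|^{-1}=|g_0|^{-s}=|d(g_0)|^{-s/2}$, as required.

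The generator needing care is $h=\bar n(y_0)$, since it does not normalize $\{u(x)\}$. The key is the identity, valid inside $S_n$, $u(x)\bar n(y_0)=\bar n(y_0)\,u(x)\,n(-y_0x)$, which together with $d(g)\bar n(y_0)=\bar n(gy_0)d(g)$, the commuting of $\bar n(\cdot)$ with $n(\cdot)$ and of $n(\cdot)$ with $u(\cdot)$, and $d(g)n(z')=n(gz'g^{-1})d(g)$, lets one rewrite
\[
n(z)d(g)u(x)\bar n(y_0)=\bar n(gy_0)\,n\!\big(z-gy_0xg^{-1}\big)\,d(g)\,u(x).
\]
Now $\bar n(w)$ is unipotent upper triangular and its only nonzero off-diagonal entries lie in the last column, which $\sigma_n$ fixes while carrying those entries to rows $1,3,\dots,2m-1$; none of them lands on the superdiagonal, so $\sigma_n\bar n(w)\sigma_n^{-1}\in N_n$ with $\theta(\sigma_n\bar n(w)\sigma_n^{-1})=1$, whence $W(\sigma_n\bar n(gy_0)A)=W(\sigma_n A)$ for every $A$. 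Then translating $z$ by $gy_0xg^{-1}$ replaces $\theta(-Tr(z))$ by $\theta(-Tr(z))\,\theta(-Tr(y_0x))=\theta(-Tr(z))\,\theta(-\langle x,y_0\rangle)$, which cancels against the factor $(R_\theta(\bar n(y_0))\phi)(x)=\theta(\langle x,y_0\rangle)\phi(x)$; since $|\bar n(y_0)|=1$ this yields $J_\theta(s,W,\phi)$. This last case — reassembling $u(x)\bar n(y_0)$ inside $S_n$ and tracking how the leftover $\bar n(gy_0)$ meets $\sigma_n$ while the leftover $n(-y_0x)$ is absorbed into the $z$-integration — is the only substantive step; the other three generators are routine changes of variables.
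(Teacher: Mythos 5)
Your proposal is correct and takes essentially the same approach as the paper: the paper's proof simply unfolds $J_\theta(s,W,\phi)$ as an absolutely convergent triple integral for $Re(s)$ large and checks the quasi-invariance on the generators of $S_n$ from Proposition \ref{representation of $S_n$} by ``a simple change of variables.'' You have carried out exactly these generator-by-generator computations (including the only delicate case $\bar n(y_0)$, where the conjugate $\sigma_n\bar n(gy_0)\sigma_n^{-1}$ lands in $N_n$ off the superdiagonal and the commutator $n(-y_0x)$ is absorbed into the $z$-integration), together with the routine reduction via Lemma \ref{sameideal} and rationality.
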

\begin{proof}
We recall that, for $\sigma=\sigma_n$,  
\[
J_\theta(s,W,\phi)=\int W\left(\sigma \begin{pmatrix} I_m & z &  \\ & I_m &  \\ &  & 1 \end{pmatrix}\!\!\begin{pmatrix} g &  &  \\ & g &  \\ &  & 1 \end{pmatrix}\!\!\begin{pmatrix} I_m &  &  \\ & I_m &  \\ & x & 1 \end{pmatrix}\!\!\right)\phi(x)\theta(Tr(-z))|g|^{s-1}dxdzdg
\]
which is absolutely convergent for $s$ large enough. 
One just needs to check the invariance of $B_{s,\pi,\theta}$ under the generators of $S_n$ given in Lemma \ref{representation of $S_n$}. This follows from 
a simple change of variables.
\end{proof}

Let $w=w_n$ be longest Weyl element of  of $G_n$, represented by the antidiagonal matrix with ones along the second diagonal, i.e., $w=\bpm & & 1\\& \sddots\\ 1\epm$. We denote by $\widetilde{W}$ the map on $G_n$ defined by 
$\widetilde{W}(g)=W(w{}^t\!g^{-1})$. Then $W\mapsto \widetilde{W}$ is a vector space isomorphism between $\mathcal W(\pi,\theta)$ and 
$\mathcal W(\pi^\vee,\theta^{-1})$, where $\pi^\vee$ denotes the (admissible) contragredient of $\pi$, which satisfies $\widetilde{\rho(h)W}=\rho({}^t\!h^{-1})\widetilde{W}$. Now, Proposition \ref{fourier} has the following consequence.

\begin{LM}\label{invariantbilinear2}
The bilinear form $C_{s,\pi,\theta}:(W,\phi)\mapsto B_{1-s,\pi^\vee,\theta^{-1}}(\rho(\tau)\widetilde{W},\widehat{\phi})$ on $\mathcal W(\pi, \theta)\times \mathcal{C}_c^\infty (F^m)$ 
also belongs to the space $Hom_{S_n}(\mathcal W(\pi, \theta)\otimes \mathcal{C}_c^\infty (F^m),|.|^{-s/2})$.
\end{LM}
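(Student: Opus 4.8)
The plan is to show that $C_{s,\pi,\theta}$ transforms under $S_n$ by the character $|\cdot|^{-s/2}$ by unwinding the definition and tracking how each ingredient interacts with the automorphism $s\mapsto {}^t\!s^{-\tau}$ of $S_n$. Fix $h\in S_n$. First I would use Lemma \ref{invariantbilinear} for the representation $\pi^\vee$ in place of $\pi$, the parameter $1-s$ in place of $s$, and the character $\theta^{-1}$ in place of $\theta$: this gives $B_{1-s,\pi^\vee,\theta^{-1}}(\rho(h')W',R_{\theta^{-1}}(h')\phi')=|h'|^{-(1-s)/2}B_{1-s,\pi^\vee,\theta^{-1}}(W',\phi')$ for any $h'\in S_n$. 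The strategy is to apply this with $h'={}^t\!h^{-\tau}$, $W'=\rho(\tau)\widetilde{W}$, $\phi'=\widehat\phi$, and then to rewrite the arguments $\rho({}^t\!h^{-\tau})\rho(\tau)\widetilde{W}$ and $R_{\theta^{-1}}({}^t\!h^{-\tau})\widehat\phi$ in terms of $\rho(h)$ and $R_\theta(h)$ acting first.

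The key computation has two parallel halves. On the Whittaker side, I would use the intertwining property $\widetilde{\rho(g)W}=\rho({}^t\!g^{-1})\widetilde W$ recorded just before the lemma, together with the definition $h^\tau=\tau h\tau^{-1}$, to check the identity $\rho({}^t\!h^{-\tau})\rho(\tau)\widetilde{W}=\rho(\tau)\widetilde{\rho(h)W}$; this is a purely formal manipulation, since ${}^t\!h^{-\tau}\tau = \tau\,({}^t\!h^{-1})^\tau\cdot\tau^{-1}\tau$ rearranges to $\tau\,{}^t\!h^{-1}$ once one conjugates through $\tau$, and $\tau^2=I_n$. On the Schwartz side, Proposition \ref{fourier} gives exactly $\mathcal F(R_\theta(h)\phi)=|h|^{-1/2}R_{\theta^{-1}}({}^t\!h^{-\tau})\mathcal F(\phi)$, i.e. $R_{\theta^{-1}}({}^t\!h^{-\tau})\widehat\phi=|h|^{1/2}\,\widehat{R_\theta(h)\phi}$. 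Combining both, the right-hand side of Lemma \ref{invariantbilinear} applied to ${}^t\!h^{-\tau}$ becomes $|{}^t\!h^{-\tau}|^{-(1-s)/2}B_{1-s,\pi^\vee,\theta^{-1}}(\rho(\tau)\widetilde{W},\widehat\phi)$, while the left-hand side becomes $|h|^{1/2}B_{1-s,\pi^\vee,\theta^{-1}}(\rho(\tau)\widetilde{\rho(h)W},\widehat{R_\theta(h)\phi})=|h|^{1/2}C_{s,\pi,\theta}(\rho(h)W,R_\theta(h)\phi)$.

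Finally I would bookkeep the absolute value exponents. Since $\tau$ and transpose-inverse preserve $|\det|$ up to inversion, $|{}^t\!h^{-\tau}|=|h|^{-1}$, so $|{}^t\!h^{-\tau}|^{-(1-s)/2}=|h|^{(1-s)/2}$. Equating the two sides yields $|h|^{1/2}C_{s,\pi,\theta}(\rho(h)W,R_\theta(h)\phi)=|h|^{(1-s)/2}C_{s,\pi,\theta}(W,\phi)$, hence $C_{s,\pi,\theta}(\rho(h)W,R_\theta(h)\phi)=|h|^{-s/2}C_{s,\pi,\theta}(W,\phi)$, which is precisely membership in $Hom_{S_n}(\mathcal W(\pi,\theta)\otimes\mathcal C_c^\infty(F^m),|\cdot|^{-s/2})$. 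One also needs to note that $C_{s,\pi,\theta}$ is well-defined and bilinear: this is immediate because $W\mapsto\rho(\tau)\widetilde W$ is a linear isomorphism $\mathcal W(\pi,\theta)\to\mathcal W(\pi^\vee,\theta^{-1})$, $\phi\mapsto\widehat\phi$ is a linear automorphism of $\mathcal C_c^\infty(F^m)$ by Proposition \ref{fourier}, and $B_{1-s,\pi^\vee,\theta^{-1}}$ is bilinear by Lemma \ref{invariantbilinear}. The main obstacle is purely the matrix algebra showing $\rho({}^t\!h^{-\tau})\rho(\tau)\widetilde W=\rho(\tau)\widetilde{\rho(h)W}$ together with getting every sign and exponent in the powers of $|h|$ to cancel correctly; there is no analytic difficulty, as the needed convergence and rationality are already supplied by Lemma \ref{sameideal} and the discussion preceding it.
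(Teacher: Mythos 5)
Your proposal is correct and follows essentially the same route as the paper's proof: both apply Lemma \ref{invariantbilinear} for $(\pi^\vee,1-s,\theta^{-1})$ at the element ${}^t\!h^{-\tau}$ (using that $h\mapsto{}^t\!h^{-\tau}$ is an automorphism of $S_n$), together with Proposition \ref{fourier}, the identity $\widetilde{\rho(h)W}=\rho({}^t\!h^{-1})\widetilde W$, the commutation $\rho({}^t\!h^{-\tau})\rho(\tau)=\rho(\tau)\rho({}^t\!h^{-1})$, and the computation $|{}^t\!h^{-\tau}|=|h|^{-1}$, with the same exponent bookkeeping $-\tfrac12+\tfrac{1-s}{2}=-\tfrac{s}{2}$. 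The only difference is cosmetic: you read the chain of equalities in the opposite direction from the paper, which computes $C_{s,\pi,\theta}(\rho(h)W,R_\theta(h)\phi)$ directly.
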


\begin{proof} By definition, for $h\in S_n$ we have 
\[
C_{s,\pi,\theta}(\rho(h)W,R_\theta(h)\phi)=B_{1-s, \pi^\vee,\theta^{-1}}(\rho(\tau)\widetilde{\rho(h)W},\mathcal F(R_\theta(h)\phi)).
\]
 If we now use  Proposition \ref{fourier} and Lemma \ref{invariantbilinear} to compute the right hand side we have
\[
\begin{aligned}
B_{1-s, \pi^\vee,\theta^{-1}}(\rho(\tau)\widetilde{\rho(h)W},\mathcal F(R_\theta(h)\phi))&=B_{1-s, \pi^\vee,\theta^{-1}}(\rho(\tau)\rho({^th^{-1}})\widetilde{W},|h|^{-1/2}R_{\theta^{-1}}({^th^{-\tau}})\widehat{\phi})\\
&=|h|^{-1/2}B_{1-s, \pi^\vee,\theta^{-1}}(\rho({^th^{-\tau}})\rho(\tau)\widetilde{W},R_{\theta^{-1}}({^th^{-\tau}})\widehat{\phi})\\
&=|h|^{-s/2}C_{s,\pi,\theta}(W,\phi).
\end{aligned}
\]
\end{proof}

The functional equation will then follow if we can prove that for almost all $s$, the space $Hom_{S_n}(\mathcal W(\pi, \theta)\otimes \mathcal{C}_c^\infty (F^m),|.|^{-s/2})$ is of dimension at most $1$. That is what we do in the next section.

\subsection{The local functional equation}

 We denote by $L_n$ the maximal (non-standard) Levi subgroup of $G_n$ of type $(m+1,m)$, given by
 \[
 L_n= \left\{\begin{pmatrix} g_1 &  & u\\ & g_2 & \\ v &  & \l\end{pmatrix} \in G_n \big| 
u\in \mathcal{M}_{m,1},\  v\in \mathcal{M}_{1,m},\ \l \in F,\  g_1, g_2\in G_m\ \right\}.
\]
 We first  show that if $\pi$ is 
an irreducible representation of $G_n$, then there is an injection of the vector space $Hom_{P_n\cap S_n}(\mathcal W(\pi,\theta),\Theta)$ into 
$Hom_{P_n\cap L_{n}}(\mathcal W(\pi,\theta),\chi)$ for some character $\chi$ of $L_n$. This  will be a  consequence of the technique in Paragraph 6.2 in \cite{JR}. This will then give us a  multiplicity one result which we can apply to the functionals $B_{s,\pi,\theta}$ and $C_{s,\pi,\theta}$ above to obtain the functional equation.

Let $\Pi=\mathcal W(\pi,\theta)$. Recall that if $\pi$ is generic then $\mathcal W(\pi,\theta)\simeq \pi$ while if $\pi$ is not generic then $\mathcal W(\pi,\theta)\simeq \Xi$ where $\Xi$ is the induced representations of Langlands type having $\pi$ as its unique irreducible quotient.   If $L$ is an element of the space $Hom_{P_n\cap S_n}(\Pi,\Theta)$ and $v$ belongs to $\Pi$, we denote by $S_{L,v}$ the function on $G_n$ defined as $S_{L,v}(g)=L(\Pi(g)v)$. If we formally  set 
\[
I(S_{L,v},s)=\int_{G_m}S_{L,v}(diag(g,I_{m+1}))|g|^{s}dg
\]
and 
\[
\Gamma_L(v)=I(S_{L,v},s)
\]
then a simple change of variables in the integral  gives that $\Gamma_L\in Hom_{P_n\cap L_n}(\Pi,\chi_{s})$ where $\chi_{s}\begin{pmatrix}g_1 & & u\\ &g_2& \\ & & 1 \end{pmatrix}=\displaystyle{\left(\frac{|g_1|}{|g_2|}\right)^{-s}}$. To actually implement this we need to first understand the convergence of $I(S_{L,v},s)$ and then in the realm of convergence show that the map $L\mapsto \Gamma_L$ is indeed injective.

We begin with convergence. We write $U'_i$ for the unipotent radical of the standard parabolic of type $(i,n-i)=(i,2m+1-i)$. In what follows all that is important is that $\Pi$ has finite length.

\begin{prop}
 For $a$ in $({F^\times})^m$, we denote by $m(a)$ the matrix $diag(b_1,\dots,b_m,I_{m+1})$, with $b_i=a_i\dots a_m$. For $1\leq i \leq m$, there is a finite set $X_{\Pi,i}$ of characters of 
$F^\times$ (namely the central characters of the irreducible sub-quotients of the Jacquet modules $\Pi_{U'_i}$ of $\Pi$), such that if 
$S_{L,v}$ is as above, and $|a_i|\leq 1$ when $i$ is between $1$ and $m-1$, then $S_{L,v}(m(a))$ is a sum of functions of the form 
\[
\prod_{i=1}^m \chi_i(a_i)val(a_i)^{m_i}\varphi(a)
\]
with $\chi_i\in X_{\Pi,i}$, integers $m_i\geq 0$, and $\varphi$ a Schwartz function on $F^m$. This implies that there is a real number $r_\Pi$, such that the integral 
\[
I(S_{L,v},s)=\int_{G_m}S_{L,v}(diag(g,I_{m+1}))|g|^{s}dg
\]
 is absolutely 
convergent for $Re(s)> r_\Pi$.
\end{prop}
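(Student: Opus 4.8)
The plan is to analyze the asymptotic behavior of the function $S_{L,v}$ on the torus via the Bernstein--Zelevinsky theory of derivatives and Jacquet modules, and then to convert this asymptotic control into an absolute convergence statement for the integral $I(S_{L,v},s)$ in a right half-plane. Since $\Pi=\mathcal{W}(\pi,\theta)$ has finite length (it is either $\pi$ itself or an induced representation $\Xi$ of Langlands type), one may work throughout with the finiteness of its Jacquet modules with respect to the parabolics of type $(i,n-i)$.

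First I would recall that a function $S_{L,v}$ of the form $g\mapsto L(\Pi(g)v)$ restricted to $P_n$ is a smooth vector in the space of $\Pi|_{P_n}$, hence can be analyzed by the Bernstein--Zelevinsky filtration and the exactness of the functors $\Phi^{\pm},\Psi^{\pm}$; the upshot of that theory is that the behavior of $W(g)$ for $g=\mathrm{diag}(b_1,\dots,b_m,I_{m+1})$ in the region $|b_i/b_{i+1}|=|a_i|\le 1$ for $1\le i\le m-1$ is governed by the Jacquet modules $\Pi_{U'_i}$ along the chain of parabolics of type $(i,n-i)$. More precisely, one shifts the matrix coefficient onto the mirabolic, iterates the Jacquet module along the $U'_i$ one step at a time (using that $m(a)$ moves toward the ``small'' directions exactly along these unipotent radicals), and at each stage extracts the exponents: the contribution in the variable $a_i$ is a finite sum of terms $\chi_i(a_i)\,\mathrm{val}(a_i)^{m_i}$ with $\chi_i$ ranging over the central characters of the irreducible subquotients of $\Pi_{U'_i}$ (the set $X_{\Pi,i}$) and $m_i\ge 0$ bounded by the length, times a Schwartz function $\varphi(a)$ accounting for the ``large'' directions and the compactly-supported tails. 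This is the standard asymptotic expansion of Whittaker/Kirillov functions on the torus, and the key inputs are the finiteness of length of $\Pi$ and the compatibility of the $U'_i$ with the cone $|a_i|\le 1$.

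Next, granting the asymptotic expansion, the convergence of $I(S_{L,v},s)=\int_{G_m}S_{L,v}(\mathrm{diag}(g,I_{m+1}))|g|^s\,dg$ reduces, by the Iwasawa decomposition $G_m=N_mA_mK_m$ and the compactness of $K_m$, to an integral over the diagonal torus $A_m$ of $G_m$. Writing $g$ in terms of the coordinates $a_i$ (so that $|g|=\prod|b_i|=\prod_i |a_i|^{i}$ up to the modulus character of $B_m$, which only shifts $r_\Pi$), and using that $S_{L,v}$ is $K_m$-finite so only finitely many $K_m$-translates occur, the integrand on the region $|a_i|\le 1$ for $i\le m-1$ becomes a finite sum of $\prod_i \chi_i(a_i)\mathrm{val}(a_i)^{m_i}\varphi(a)|a_i|^{\,s\cdot i + c_i}$ for constants $c_i$. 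Each such geometric-type sum over $a_i\in\varpi^{\Z_{\ge 0}}$ converges once $\mathrm{Re}(s)$ is large enough that $|\chi_i(\varpi)|\,q^{-(s i+c_i)}<1$, i.e.\ for $\mathrm{Re}(s)>r_\Pi$ with $r_\Pi$ depending only on $\max_i |val(\chi_i(\varpi))|$ over the finite set $X_{\Pi,i}$; the directions not covered by the cone condition (only $a_m$, or more generally the finitely many boundary directions) are handled by the Schwartz factor $\varphi$, which is compactly supported there. Since $X_{\Pi,i}$ and the $m_i$ depend only on $\Pi$, not on $L$ or $v$, the bound $r_\Pi$ is uniform as claimed.

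The main obstacle I expect is establishing the asymptotic expansion with the precise set of exponents $X_{\Pi,i}$ and the correct cone of validity: one must carefully track how the Bernstein--Zelevinsky derivative filtration interacts with successive Jacquet modules along the $U'_i$, ensure the induced-from-$P_{k-1}U_k$ pieces contribute genuinely Schwartz (rapidly decaying) behavior while the $\Psi^+$-pieces contribute the characters of the Jacquet modules, and verify that the region $|a_i|\le 1$ ($1\le i\le m-1$) is exactly the one in which this expansion is valid. Once that expansion is in hand, the passage to absolute convergence is a routine estimate on a product of one-dimensional $p$-adic integrals, so the substance of the proof is entirely in the first step.
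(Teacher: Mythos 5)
There is a genuine gap, and it lies exactly where you locate the ``substance'' of the proof. Your central mechanism is to treat $S_{L,v}$ as if it were a Whittaker/Kirillov function and invoke ``the standard asymptotic expansion'' via the Bernstein--Zelevinsky filtration of $\Pi|_{P_n}$ by the functors $\Phi^{\pm},\Psi^{\pm}$. But $S_{L,v}(g)=L(\Pi(g)v)$ is built from a functional $L\in Hom_{P_n\cap S_n}(\Pi,\Theta)$: its left equivariance is under $(P_n\cap S_n,\Theta)$, not under $(N_n,\theta)$, and $L$ is not a smooth functional, so neither the Whittaker asymptotics nor a Casselman-type matrix-coefficient argument (which needs smoothness of the functional to pass to Jacquet modules) applies as stated; saying that ``$S_{L,v}$ restricted to $P_n$ is a smooth vector in $\Pi|_{P_n}$'' conflates the function with the vector $v$. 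The actual proof, following Section 6.2 of \cite{JR}, needs three specific ingredients that your sketch does not supply: (i) the support bound in the $a_m$-direction on the cone $|a_i|\le 1$ ($i<m$), which comes from the relation $L(\pi(a)\pi(u)v)=\Theta(aua^{-1})L(\pi(a)v)$ for $u\in U'_m\subset S_n$ --- this, not an unexplained ``Schwartz factor,'' is why $\varphi$ may be taken compactly supported in $a_m$; (ii) the analogue of Lemma 6.2 of \cite{JR}: if $v\in V(U'_i)$ then $S_{L,v}(m(a))$ vanishes for $|a_i|$ small on the cone, a statement proved using only the $(S_n,\Theta)$-quasi-invariance and right smoothness, and which is what makes the quotients $\mathcal V/\mathcal V_i$ into quotients of the Jacquet modules $\Pi_{U'_i}$; and (iii) the finite length of $\Pi_{U'_i}$ fed into the expansion Lemma \ref{lm} (a variant of Lemma 2.2.1 of \cite{JPS}, via \cite{M11}). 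Asserting that the exponents ``are'' the central characters of the subquotients of $\Pi_{U'_i}$ by appeal to Whittaker theory begs precisely the point to be proved for Shalika-type functions.

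The convergence step also has a gap: you reduce $\int_{G_m}S_{L,v}(\mathrm{diag}(g,I_{m+1}))|g|^s\,dg$ to the torus ``by Iwasawa and compactness of $K_m$,'' but the Iwasawa decomposition leaves a non-compact $N_m$-integration which the asymptotic expansion (valid only on $m(a)K$) does not control; there is no $N_m\backslash G_m$ quotient here as in Rankin--Selberg integrals. The paper disposes of this via the identity $L(\Pi(h)v)=L(\Pi(m(a)k)v)$, which uses the invariance of $L$ under the Levi $\{\mathrm{diag}(g,g,1)\}$ of the Shalika group, following p.~119 of \cite{JR}; the paper explicitly notes this point was problematic in the even case \cite{M2}, so it cannot be waved through. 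Your final observation that the one-variable geometric sums converge for $\mathrm{Re}(s)$ large once the expansion is granted is fine, but both the expansion and the reduction to the torus need the Shalika-specific arguments above.
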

\begin{proof}
Let $V$ be the space of $\Pi$. 
As in p.118 of \cite{JR}, we see  that there is $c=c_{L,v}>0$, such that $|a_m|\geq c$, and $|a_i|\leq 1$ for $i\in\{1,\dots,m-1\}$ implies 
$S_{L,v}(m(a))=0$, thanks to the relation $L(\pi(a)\pi(u)v)=\Theta(aua^{-1})L(\pi(a)v)$ for $u\in U'_m\subset S_n$.

Lemma 6.2. of \cite{JR}, which asserts that if $i$ is a positive integer $\leq m$ and if $v\in V(U'_i)=\{\pi(u)v'-v' \mid v'\in V, u\in U'_i \}$, then $S_{L,v}(diag(m(a))$ vanishes if $|a_i|$ is small enough, and $|a_j|\leq 1$ for 
$1\leq j \leq m$, is also valid in our case. This lemma only uses the quasi-invariance of $S_{L,v}$ under the Shalika subgroup $S_n$, and its right 
smoothness. We  indicate the notational changes to be made in Lemma 6.2 of \cite{JR} for our situation: 
$a=m(a):=diag(b_1,\dots,b_m,I_{m+1})$ instead of $diag(b_1,\dots,b_m,I_{m})$, $u_1:=\begin{pmatrix} I_m & Z \\ & I_{m+1} \end{pmatrix}$, 
$u_2:=\begin{pmatrix} u' &  \\ & I_{m+1} \end{pmatrix}$, $u'$ is the same, and 
$\begin{pmatrix} I_m &   \\   & bu'^{-1}b^{-1} \end{pmatrix}$ replaced by $ \begin{pmatrix} I_m &  & \\   & bu'^{-1}b^{-1} & \\ & & 1\end{pmatrix}.$ 
Notice that there is a typo in \cite{JR}, equality at the top of p.120, where the second $\pi(a)$ should stand just before $v_0$. 
This shows that the lemma applies in our situation.

Now, let $H_i$ be the group $\{diag(tI_i,I_{m+1-i}),t\in F^\times\}$, $H_i^1=\{diag(tI_i,I_{m+1-i}),t\in \o-0\}$, $H=\prod_{i=1}^m H_i$, and 
$H^1=\prod_{i=1}^m H_i^1$. For $i\leq m$, the Jacquet module $V_{U'_i}=V/V(U'_i)$ has finite length and $H_i$ acts by a character on each irreducible subquotient. Fix $L\in Hom_{P_n\cap S_n}(\Pi,\Theta)$, and call $\mathcal{V}$ the space of maps $\phi_{L,v}:a\in H\mapsto S_{L,v}(m(a))$ for $v\in V$.  $\mathcal V$  is certainly a smooth $H$-module. Let  $\mathcal{V}_i$ denotes the $H_i$-submodule of functions $\phi$ in $\mathcal{V}$, such that there is $c_\phi>0$, which satisfies that $\rho(h_i)\phi$ vanishes on $H^1$ when $|h_i|\leq c_\phi$ (with $h_i \in H_i\simeq F^\times$). Then, $\mathcal{V}/\mathcal{V}_i$ is a quotient 
of $V_{U'_i}$ (thanks to our version of Lemma 6.2), and we can apply Lemma \ref{lm} below, which tells us that for any $v\in V$, $\phi_{L,v}$ restricts to $H^1$ as we expect. Now, let $(z_\beta)_\beta$ be a finite set of representatives of $\{a_m\in H_m \mid 1\leq |a_m|\leq c_{L,v}\}/U$ for a $U$ compact open subgroup 
of $H_m$ fixing $\phi_{L,v}$. We can then write $\mathbf{1}_{\{1\leq |a_m|\leq c_{L,v}\}}\phi_{L,v}(a_1,\dots,a_{m-1},a_m)$ as
$$\sum_{\beta} \phi_{L,v}(a_1,\dots,a_{m-1},z_\beta)\mathbf{1}_{z_\beta U}(a_m)=\sum_{\beta} \phi_{L,\pi(z_\beta)v}(a_1,\dots,a_{m-1},1)\mathbf{1}_{z_\beta U}(a_m).$$ We now conclude (as $1\in H_m^1$), thanks to the relation 
$$\phi_{L,v}(a_1,\dots,a_{m-1},a_m)=\mathbf{1}_{\{|a_m|\leq 1\}}\phi_{L,v}(a_1,\dots,a_{m-1},a_m)+\mathbf{1}_{\{1\leq |a_m|\leq c_{L,v}\}}\phi_{L,v}(a_1,\dots,a_{m-1},a_m)$$ for $|a_i|\leq 1$ when $i\leq m-1$.

The asymptotic expansion implies the convergence of the integral as on the top of p.119 of \cite{JR}, as we can here as well 
write $L(\Pi(h)v)=L(\Pi(m(a)k)v$ (see \cite{JR}), because $\{diag(g,g,1),\in G_m\}$ fixes $L$.  (This part was a problem in the even case \cite{M2}.)
\end{proof}

We are left with proving Lemma \ref{lm} below. This lemma is very similar to Lemma 2.2.1 of \cite{JPS}. We will give a slightly different proof, based on \cite{M11}, where the exponents of the representation  appear.

\begin{LM} \label{lm}
Let $H$ be a torus of dimension $m$, decomposed as $H=\prod_{i=1}^m H_i$ with each $H_i\simeq F^\times$. Let $H^1_i\subset H_i$ be the inverse image of $\o-\{0\}$ in $H_i$ and set $H^1=\prod_{i=1}^m H^1_i$. Let $\mathcal V$ be a space of uniformly smooth functions on $H$, that is, each fixed by some open subgroup of $H$,   and for each $i$ set 
\[
\begin{aligned}
\mathcal V_i  =\{ \phi\in \mathcal V\mid   \text{ there exists } c_\phi>0 \text{ such that } \phi(a)=0  \text{ for all } a \in H^1 \text{ with } |a_i|<c_\phi\}.
\end{aligned}
\]
Suppose each quotient module $\mathcal Q_i=\mathcal V/\mathcal V_i$ has a finite filtration $0\subset \mathcal{Q}_{1,i}\subset \dots \subset \mathcal{Q}_{n_i,i}=\mathcal{Q}_i$, 
such that $H_i$ acts by a character on each successive subquotient $\mathcal{Q}_{l+1,i}/\mathcal{Q}_{l,i}$. Let $X_i$ be the finite family of such characters.  Then there is a finite collection of functions $\xi(a)=\prod_{i=1}^m\chi_i(a_i)val(a_i)^{n_i}$ with $\chi_i\in X_i$ and $n_i\in \mathbb N$ such that  for all $\phi\in \mathcal V$  and $a\in H^1$ we have
\[
\phi(a)=\sum_{\xi} \xi(a)\varphi_\xi(a)
\]
with $\varphi_\xi$ a Schwarts function on $\o^m$.
\end{LM}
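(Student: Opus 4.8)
The plan is to induct on $m$, peeling off the last factor $H_m$ at each step. Two elementary ingredients drive the argument. First, a uniformly smooth $H$-module decomposes, under the maximal compact subgroup $\o^\times$ of any fixed $H_i$, into finitely many isotypic components, on each of which translation by $\w$ in that variable acts through a single polynomial relation. Second, the hypothesis that $\mathcal Q_i=\mathcal V/\mathcal V_i$ has a finite filtration of length $\ell_i$ with successive quotients characters of $H_i$ from $X_i$ is equivalent to saying that $p_i(\rho_i(\w))$ annihilates $\mathcal Q_i$, where $p_i(T)=\prod_{\chi\in X_i}\big(T-\chi(\w)\big)^{\ell_i}$ and $\rho_i(\w)$ is translation by $\w$ in the $i$-th variable; that is, $p_i(\rho_i(\w))\phi\in\mathcal V_i$ for every $\phi\in\mathcal V$, hence it vanishes once $|a_i|$ is small. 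Projecting first onto an $\o^\times_i$-isotypic component refines this to a polynomial all of whose roots are $\chi(\w)$ with $\chi$ restricting to the chosen character of $\o^\times_i$. Set $D_i:=|X_i|\,\ell_i$; this will bound all exponents $n_i$.

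\emph{Base case $m=1$.} Fix an open $K_0\subseteq\o^\times$ fixing $\mathcal V$. Given $\phi\in\mathcal V$, write $\phi=\sum_\eta\phi^{[\eta]}$ for its $\o^\times$-isotypic decomposition ($\eta$ ranging over the finitely many characters of $\o^\times/K_0$ that occur), so $\phi^{[\eta]}(\w^k u)=\eta(u)\,g_\eta(k)$ for a function $g_\eta$ on $\Z$. By the second ingredient, in its refined form for $\phi^{[\eta]}$, the sequence $g_\eta$ satisfies a linear recurrence for $k\geq N$ (with $N\geq1$ depending on $\phi$) whose characteristic roots are among the $\chi(\w)$, $\chi\in X_1$, $\chi|_{\o^\times}=\eta$, with multiplicities $\leq D_1$. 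Solving it and using $\eta(u)\chi(\w)^k=\chi(\w^k u)$ gives $\phi(a)=\sum_{\chi\in X_1,\,0\leq n<D_1}\chi(a)\,val(a)^n\,b_{\chi,n}$ for $val(a)\geq N$, with \emph{constant} coefficients $b_{\chi,n}$ (constancy is exactly why one first passes to $\o^\times$-isotypic parts). Fix one $\chi^0\in X_1$; define $\varphi_{\chi,n}$ to be $b_{\chi,n}$ on $\w^N\o$ and $0$ on $\o\setminus\w^N\o$, except that $\varphi_{\chi^0,0}$ is $\chi^0(a)^{-1}\phi(a)$ on $\o\setminus\w^N\o$. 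Each $\varphi_{\chi,n}$ is locally constant on $\o$, hence Schwartz, and $\phi(a)=\sum_{\chi,n}\chi(a)\,val(a)^n\varphi_{\chi,n}(a)$ on $\o-0$ by inspection; the correction is harmless because it lives on a compact subset of $\o$ bounded away from $0$, where $\chi^0(a)^{-1}$ is bounded.

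\emph{Inductive step.} Write $H=H'\times H_m$, $H'=\prod_{i<m}H_i$, $a=(a',a_m)$, and put $\mathcal V'=\{a'\mapsto\phi'(a',1):\phi'\in\mathcal V\}$, a space of uniformly smooth functions on $H'$. Since $\rho_m(c)$ is an automorphism of $\mathcal V$, the evaluation $e_c:\phi'\mapsto\phi'(\cdot,c)$ satisfies $e_c(\mathcal V)=e_1(\mathcal V)=\mathcal V'$ for all $c\in F^\times$; and because $1\in\o^\times$, $e_1$ carries $\mathcal V_i$ into $\mathcal V'_i$ for $i<m$, so it induces a surjection $\mathcal V/\mathcal V_i\twoheadrightarrow\mathcal V'/\mathcal V'_i$. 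Hence $\mathcal V'/\mathcal V'_i$ inherits a finite filtration by characters of $H_i$ from $X_i$, of length $\leq\ell_i$, so $\mathcal V'$ satisfies the hypotheses for $m-1$ factors. Running the $m=1$ analysis in the variable $a_m$, with $a'$ a parameter, gives for $val(a_m)\geq N$ (with $N\geq1$ depending on $\phi$)
\[
\phi(a',a_m)=\sum_{\chi_m\in X_m,\ 0\leq n_m<D_m}\chi_m(a_m)\,val(a_m)^{n_m}\,\psi_{\chi_m,n_m}(a'),
\]
where each $\psi_{\chi_m,n_m}$ is a finite linear combination of functions $a'\mapsto\phi(a',c)$ with $c\in\o-0$, hence lies in $\mathcal V'$; and for $0\leq val(a_m)<N$ the function $a'\mapsto\phi(a',a_m)=e_{a_m}(\phi)(a')$ lies in $\mathcal V'$ and takes only finitely many values as $a_m$ varies. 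Apply the inductive hypothesis to this finite list of elements of $\mathcal V'$.

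\emph{Assembly and main obstacle.} Multiplying the displayed $a_m$-expansion by the $(m-1)$-variable expansions of the $\psi_{\chi_m,n_m}$ yields $\phi=\sum_\xi\xi\,\varphi_\xi$ on $\{val(a_m)\geq N\}$, with $\xi(a)=\prod_{i=1}^m\chi_i(a_i)\,val(a_i)^{n_i}$ ($\chi_i\in X_i$, $0\le n_i<D_i$) ranging over a finite set depending only on $\mathcal V$, and $\varphi_\xi$ Schwartz on $\o^m$ and independent of $a_m$. The remaining discrepancy is supported in $\{0\leq val(a_m)<N\}$; substituting there the induction expansions of the finitely many functions $e_{a_m}(\phi)$ and of the coefficient functions above, it can be written as $\sum_{\xi'}\xi'(a')\,\1_{\{val(a_m)<N\}}(a_m)\,\omega_{\xi',a_m}(a')$ with $\omega_{\xi',a_m}$ Schwartz on $\o^{m-1}$ and taking finitely many values. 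Fixing one $\chi^0_m\in X_m$, we absorb this into the terms with $\chi_m=\chi^0_m$, $n_m=0$ by adding $\chi^0_m(a_m)^{-1}\1_{\{val(a_m)<N\}}(a_m)\,\omega_{\xi',a_m}(a')$ to the relevant $\varphi_\xi$; since $\{val(a_m)<N\}$ is bounded away from $a_m=0$, this added function is Schwartz on $\o^m$. The resulting identity holds on all of $H^1$, completing the induction. I expect the most delicate point to be checking that $\mathcal V'$ still satisfies the finiteness hypothesis, i.e.\ that $\mathcal V'/\mathcal V'_i$ has finite length; this rests on identifying $\mathcal V'$ as the image of the \emph{single} evaluation $e_1$ (the translations collapsing all $e_c$ to $e_1$) together with $e_1(\mathcal V_i)\subseteq\mathcal V'_i$. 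A secondary nuisance is arranging the induction strictly one variable at a time, so that the region where the expansion needs correcting always involves a single coordinate and division by a character there preserves local constancy.
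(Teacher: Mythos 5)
Your argument is correct, and its skeleton is the same as the paper's: induction on the number of variables $m$, an expansion of $\phi$ in the last variable valid once $|a_m|$ is small whose coefficient functions are (combinations of) evaluations $a'\mapsto\phi(a',c)$ at points $c\in\o-\{0\}$, to which the inductive hypothesis applies because evaluation at such a $c$ carries $\mathcal V_i$ into $\mathcal V'_i$ and hence makes $\mathcal V'/\mathcal V'_i$ a quotient of $\mathcal Q_i$; and the remaining annulus $\{q^{-N}\leq|a_m|\leq 1\}$ handled by smoothness (finitely many evaluations) and absorbed after dividing by a fixed character of $X_m$, exactly as in the paper's treatment of $\mathbf 1_{\{q^{-N'}\leq|a_m|\leq 1\}}\phi$. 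Where you differ is the engine producing the one-variable expansion: the paper invokes Lemma 2.1 and Proposition 2.8 of \cite{M11} to get that the image of $\phi$ in $\mathcal Q_m$ generates a finite-dimensional $H_m$-module whose translation matrices are upper triangular with entries $\chi(h)P(val(h))$, $\chi\in X_m$, and then propagates values of the lifted basis via the matrix identity; you instead observe that the filtration hypothesis forces a polynomial in $\rho_m(\w)$ to map $\mathcal V$ into $\mathcal V_m$, pass to $\o^\times$-isotypic components so that the surviving roots $\chi(\w)$ all have $\chi|_{\o^\times}$ equal to the chosen character (this is what guarantees the characters in the expansion really lie in $X_m$, which the paper gets from \cite{M11}), and solve the resulting constant-coefficient recurrence in $val(a_m)$, the generalized Vandermonde inversion showing the coefficients are fixed linear combinations of evaluations at $\w^{N+j}\in\o-\{0\}$. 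This is self-contained and somewhat more elementary, at the cost of redoing by hand what \cite{M11} packages. Two minor points to tidy: both your proof and the paper's implicitly use that $\mathcal V$ is stable under translation by $H$ (it is in the application, and the statement's word ``module'' presupposes it), so your appeal to $\rho_m(c)$ being an automorphism of $\mathcal V$ is legitimate but should be flagged; and since only each individual $\phi$ is fixed by an open subgroup, the level $K_0\subseteq\o^\times$ in your isotypic decomposition should be chosen depending on $\phi$ (harmless, as the final finite family of functions $\xi$ depends only on the $X_i$ and the filtration lengths).
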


\begin{proof}
We will do an induction on $m$. This will be based on the following construction. Let $\phi\in \mathcal{V}$ and let $\overline{\phi}$ be its image in $\mathcal{Q}_m=\mathcal{V}/\mathcal{V}_m$. Then $\mathcal{Q}_m$ is a module for $H_m\simeq F^\times$ with a filtration as in the statement of the Lemma. As such, it satisfies the hypotheses of Lemma 2.1 of \cite{M11}, and so $\overline{\phi}$ generates a finite dimensional submodule of $\mathcal{Q}_m$ under the reduction of right translation. Let $\overline{B}=\{\overline{e}_1,\dots,\overline{e}_r\}$ be a basis of the submodule  $\langle\overline{\rho(h)\phi} \mid h\in H_m\rangle$ generated by $\overline{\phi}$.  For each $i$ let $e_i\in \mathcal{V} $ be a lift of $\overline{e_i}$. By Proposition 2.8 of \cite{M11}, if we let 
$M(h)=Mat_{\overline{B}}(\rho(h))$ be the matrix representing  right translation by $h\in H_m$ with respect to the basis $\overline{B}$, then $M(h)$ is upper triangular with entries of the form $\chi(h)P(val(h))$, for $\chi$ in $X_m$ and $P$ a polynomial. Let 
$e={^t(e_1,\dots,e_r)}\in \mathcal V^r$, so that if $\overline{\phi}=\sum_{i=1}^r x_i \overline{e}_i$. Then the difference 
\[
d(a_1,\dots,a_{m-1},h)=\phi(a_1,\dots,a_{m-1},h)-(x_1,\dots,x_r)e(a_1,\dots,a_{m-1},h)
\]
 vanishes for all $a_i\in H^1_i$, $i=1,\dots,m-1$,  and $|h|\leq q^{-t}$ for some $t\geq 0$, that is,
 \begin{equation}\label{diff}
\phi(a_1,\dots,a_{m-1},h)= (x_1,\dots,x_r)e(a_1,\dots,a_{m-1},h)
\end{equation}
 for all $a_i\in H^1_i$, $i=1,\dots,m-1$,  and $|h|\leq q^{-t}$ for some $t\geq 0$.

  For any $a$ in $H_m$, there is $n_a\in \N$, such that for any 
$l$ in $\{1,\dots,r\}$, the map $\rho(a)e_l-\sum_k M(a)_{k,l}e_k$ vanishes on the set $(\prod_{i=1}^{m-1}H^1_i) \times \{h\in H_m \mid |h|\leq q^{-n_a}\}$. 
Let $\varpi$ be a uniformizer of $F^\times$, and $U$ a compact open subgroup of $\o^\times$ such that $(1,\dots,1,U)$ fixes $e$, as well 
as the representation $h\mapsto M(h)$ of $H_m\simeq F^\times$ on $\mathbb C^r$. Choose a set $u_1,\dots,u_l$ of representatives of $\o^\times/U$, and let $n'=max(n_{u_i},n_\varpi)$. 
Fix $z$ with $|z|=q^{-n'}$. If $a_m\in H^1_m$, we can write it $a_m=\varpi^ru_i u$ for some $r\geq 0$, $i\in\{1,\dots,l\}$, and 
$u\in U$. We then have, for $a_i\in H^1_i$, the equalities 
\[
e(a_1,\dots, a_{m-1}, za_m)=e(a_1,\dots, a_{m-1}, z\varpi^ru_i)=^t\!\!M(u_i)e(a_1,\dots, a_{m-1}, z\varpi^r)
\]
 because 
$|z\varpi^r|\leq |z|\leq q^{-n_{u_i}}$. If $r\geq 1$, we then have 
\[
e(a_1,\dots, a_{m-1}, z\varpi^r)={^t\!M}(\varpi)(a_1,\dots, a_{m-1}, z\varpi^{r-1})
\]
 because 
$|z\varpi^{r-1}|\leq |z|\leq q^{-n_{\varpi}}$. Repeating this last step as needed, we find that 
\[
e(a_1,\dots, a_{m-1}, za_m)={^t\!M}(\varpi^r u_i)e(a_1,\dots, a_{m-1}, z)={^t\!M}(a_m)e(a_1,\dots, a_{m-1}, z).
\]
If we then set $N=max(n',t)$, then for $|a_m|\leq q^{-N}$ and $a_i\in H^1_i$, we have 
\begin{equation}\label{cl2}
e(a_1,\dots, a_{m-1}, a_m)={^t\!M(}z^{-1}) {^t\!M}(a_m)e(a_1,\dots, a_{m-1},z).
\end{equation}

We now begin the induction. Let $m=1$ so that $H=H_1=F^\times$ and $H^1=H^1_1=\o-0$. The formula (\ref{diff}) becomes the statement that
there exists $t>0$ such that
\[
\phi(a)=(x_1,\dots,x_r)e(a)
\] 
when $|a|\leq q^t$. From equation (\ref{cl2}) 
we  deduce that  there exists $z\in H^1$ and  $N\in \mathbb N$ such that 
\[
e(a)=^t\!\!M(z^{-1})^t\! M(a)e(z)
\]
 for $|a|\leq q^{-N}$. Hence,  if we set $N'=max(N,t)$, we obtain 
\[
\phi(a)=(x_1,\dots,x_r)e(a)=(x_1,\dots,x_r)^t\! M(z^{-1}a)e(z)
\]
 for $|a|\leq q^{-N'}$.  Hence for $a\in H^1$ we have
\[
\phi(a)=\mathbf{1}_{\{|a|\leq q^{-N'}\}}(x_1,\dots,x_r)^t\! M(z^{-1})^t\! M(a)e(z)+\mathbf{1}_{\{q^{-N'}\leq |h|\leq 1\}} \phi(a)
\]
 which is of the desired form since the $x_i$ and $z$ are fixed and the non-zero entries of $M(a)$ are of the form $\chi(a)P(val(a))$ for $\chi\in X_1$.

To complete the induction, we assume the result for $H'=\prod_{i=1}^{m-1}H_i$. Then, considering $H=\prod_{i=1}^m H_i$, by (\ref{diff}) we know 
there is a $t\geq 0$ such that 
 \[
\phi(a_1,\dots,a_{m-1},a_m)= (x_1,\dots,x_r)e(a_1,\dots,a_{m-1},a_m)
\]
 for all $a_i\in H^1_i$, $i=1,\dots,m-1$,  and $|a_m|\leq q^{-t}$.  From equation (\ref{cl2}) 
we  deduce that  there exists $z\in H_m^1$ and  $N\in \mathbb N$ such that 
\[
e(a_1,\dots, a_{m-1}, a_m)={^t\!M(}z^{-1}) {^t\!M}(a_m)e(a_1,\dots, a_{m-1},z).
\]
for $|a_m|\leq q^{-N}$ and $a_i\in H^1_i$.  Hence,  if we set $N'=max(N,t)$, and let
\[
f(a_1,\dots, a_{m-1}, a_m) = (x_1,\dots,x_r){^t\!M}(z^{-1}) {^t\!M}(a_m)e(a_1,\dots, a_{m-1},z),
\]
then
\[
\phi(a_1,\dots,a_{m-1},a_m)=\mathbf{1}_{\{|a_m|\leq q^{-N'}\}}f(a_1,\dots, a_{m-1}, a_m)+\mathbf{1}_{\{q^{-N'}\leq |a_m|\leq 1\}}\phi(a_1,\dots,a_{m-1},a_m).
\]
for $a_i\in H_i^1$.

Now, if we fix $y$ in $H_m^1$, and denote by $\mathcal{V}_y$ the space of functions on 
$H'=\prod_{i=1}^{m-1} H_i$ of the form $h'\mapsto \phi(h',y)$ for $\phi\in \mathcal{V}$. As $y$ belongs to $H_m^1$ and as $\mathcal{Q}_{y,i}=\mathcal{V}_y/\mathcal{V}_{y,i}$ is a quotient of $\mathcal{Q}_i=\mathcal{V}/\mathcal{V}_i$ for $i$ between $1$ and $m-1$,  we can apply our induction hypothesis to 
this space, so any function $\phi_y$ in $\mathcal{V}_y$ is a sum of functions of the form 
$a'\mapsto \prod_{i=1}^{m-1}\chi_i(a'_i)val(a'_i)^{m_i}\varphi(a')$, for $\chi_i\in X_i$, $m_i\in \N$, and $\varphi$ a Schwartz function on $\o^{m-1}$.
As $z$ belongs to $H_m^1$, and $h'\mapsto e_i(h',z)$ belongs to $\mathcal{V}_z$ for $i\in \{1,\dots,r\}$, we deduce that the map 
$\mathbf{1}_{\{|a_m|\leq q^{-N'}\}}f(a_1,\dots, a_{m-1}, a_m)$ is of the desired form on $H^1$.

 It remains to show that the same is true 
for the map $\mathbf{1}_{\{q^{-N'}\leq |a_m|\leq 1\}}\phi(a_1,\dots,a_{m-1},a_m)$ on $H^1$. However, taking $U$ an open subgroup of $\o^\times$ such that 
$(1,\dots,1,U)$ fixes $\phi$, and representatives $(z_\alpha)_\alpha$ of $\{q^{-N}\leq |a_m|\leq 1\}/U$, we can write 
\[
\mathbf{1}_{\{q^{-N'}\leq |a_m|\leq 1\}}\phi(a_1,\dots,a_{m-1},a_m)=\sum_\alpha \mathbf{1}_{z_\alpha U}(a_m)\phi(a_1,\dots,a_{m-1},z_\alpha)
\]
and we conclude by the induction hypothesis again applied to the $\mathcal{V}_{z_\alpha}$, that 
\[
\mathbf{1}_{\{q^{-N'}\leq |a_m|\leq 1\}}(a_1,\dots,a_{m-1},a_m)
\]
 is of the desired form  as well, which concludes the proof. 
\end{proof}

Let $s_0$ be a real number strictly greater than $r_{\Pi}$. Given $L\in Hom_{P_n\cap S_n}(\Pi,\Theta)$ and $v\in V_\Pi$, set
\[
\Gamma_L(v)=I(S_{L,v},s_0).
\]
This now converges and, as noted above, a simple  change of variables in the integral defining $I(S_{L,v}, s_0)$ gives that $\Gamma_L\in Hom_{P_n\cap L_n}(\Pi,\chi_{s_0})$ where $\chi_{s_0}\begin{pmatrix}g_1 & & u\\ &g_2& \\ & & 1 \end{pmatrix}=\displaystyle{\left(\frac{|g_1|}{|g_2|}\right)^{-s_0}}$.

\begin{prop}\label{mirabolicshalika} Suppose $s_0$ is a real number greater than $r_\Pi$.
The map $L\mapsto \Gamma_L$ gives a linear 
injection of $Hom_{P_n\cap S_n}(\Pi,\Theta)$ into the space $Hom_{P_n\cap L_n}(\Pi,\chi_{s_0})$.
\end{prop}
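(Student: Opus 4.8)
The plan is to follow the strategy of Paragraph 6.2 of \cite{JR}, the point being that the functional $\Gamma_L$ "remembers" $L$ because $L$ can be recovered from $\Gamma_L$ by an asymptotic/residue-type argument along the torus direction. First I would set up the relation between $S_{L,v}$ and $\Gamma_L$: since $\{\diag(g,g,1)\mid g\in G_m\}$ fixes $L$ and the integral defining $I(S_{L,v},s_0)$ is over $G_m=\{\diag(g,I_{m+1})\}$, a change of variables shows $\Gamma_L(\Pi(p)v)=\chi_{s_0}(p)\Gamma_L(v)$ for $p\in P_n\cap L_n$; this is the easy part, already indicated in the excerpt. The content is injectivity: I must show that if $\Gamma_L\equiv 0$ then $L\equiv 0$.

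For injectivity I would argue as follows. Suppose $\Gamma_L=0$, i.e.\ $\int_{G_m}S_{L,v}(\diag(g,I_{m+1}))|g|^{s_0}\,dg=0$ for all $v\in V_\Pi$. Using the Iwasawa decomposition $G_m=N_m A_m K$ and the quasi-invariance of $S_{L,v}$ under $N_m$ (embedded in $S_n$ via $\diag(n,n,1)$, on which $\Theta$ is trivial so $S_{L,v}$ is genuinely left $N_m$-invariant there), the vanishing of the integral for all $v$ — hence for all $K$-translates of $v$, which absorbs the $K$-integration — reduces to the vanishing of $\int_{A_m}\phi_{L,v}(a)\,\delta(a)|a|^{s_0}\,d^\times a$ for all $v$, where $\phi_{L,v}(a)=S_{L,v}(m(a))$ in the coordinates $m(a)=\diag(b_1,\dots,b_m,I_{m+1})$, $b_i=a_i\cdots a_m$, of the preceding Proposition. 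Now I invoke the asymptotic expansion just proved: on the region $|a_i|\le 1$ ($1\le i\le m-1$), $\phi_{L,v}(m(a))$ is a finite sum of terms $\prod_i\chi_i(a_i)\,\mathrm{val}(a_i)^{m_i}\varphi(a)$ with $\varphi$ Schwartz, and on the complementary regions the support is controlled (the $|a_m|\ge c_{L,v}$ cutoff from the $U'_m$-relation, and the $|a_i|>1$ directions handled by the same device applied to $U'_i$). Against such an expansion the "Tate integral in $s_0$" either converges identically to zero for $Re(s_0)$ large only if each homogeneous piece vanishes — so by standard linear independence of the characters $\chi_i$ times polynomials in $\mathrm{val}$, and then by varying $v$ (equivalently, by the nondegeneracy of $\varphi$ at $a=0$, i.e.\ the "leading coefficient" of the expansion, which is essentially $L$ itself), one concludes $L(v)=0$ for all $v$.

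Concretely, the cleanest route is: the leading term of $\phi_{L,v}$ as all $|a_i|\to 0$ has constant coefficient a nonzero multiple of $L(v)$ (this is how the expansion is built — the Schwartz function $\varphi$ at the origin records the value of the functional), while the contribution of that leading term to $I(S_{L,v},s_0)$ is, up to a nonzero constant depending on $s_0$ but not $v$, a rational function of $q^{-s_0}$ whose residue/value isolates $L(v)$. Since $\Gamma_L=0$ for the single value $s_0$, and the integral is a genuine (absolutely convergent) integral there, I should instead observe that the full family $s\mapsto I(S_{L,v},s)$ extends to a rational function of $q^{-s}$, that $\Gamma_L=0$ at one point forces — because $L\mapsto\Gamma_L$ for varying $s$ is coherent — that the whole family vanishes, and then extract $L$ from the asymptotics as $Re(s)\to+\infty$, where the integral is dominated by the leading term of the expansion.

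The main obstacle is exactly this last extraction: making rigorous that $I(S_{L,v},s)\equiv 0$ implies $L=0$, i.e.\ showing the map $L\mapsto$ (leading coefficient in the asymptotic expansion of $S_{L,v}$) is injective and compatible with the integral. This requires knowing that the "constant term at the origin" of the Schwartz functions $\varphi$ appearing in the Proposition's expansion genuinely captures $L(v)$ — which in turn needs the relation $L(\Pi(m(a))v)=\Theta(\cdots)L(\Pi(\cdots)v)$ and the fact that for $a$ with all $|a_i|$ suitably small one has $m(a)$-translates landing where $S_{L,v}$ is described by its germ, so that $S_{L,v}(m(a))$ for very small $a$ equals $L(v)$ up to a character — and to handle the finitely many competing exponents $\chi_i\in X_{\Pi,i}$ by a Mellin/linear-independence argument. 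Once that is in place, injectivity of $L\mapsto\Gamma_L$ follows, completing the proof.
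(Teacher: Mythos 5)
There is a genuine gap, and it sits exactly where you flag it. Your plan needs the vanishing of $I(S_{L,v},s)$ for all $s$ in a half-plane, so that a Mellin/linear-independence argument can be played against the asymptotic expansion; but the hypothesis only gives $\Gamma_L=I(S_{L,\cdot},s_0)=0$ at the single point $s_0$. Your attempt to propagate this (``because $L\mapsto\Gamma_L$ for varying $s$ is coherent, the whole family vanishes'') is not an argument: a rational function of $q^{-s}$ can vanish at one point without vanishing identically, and translating $v$ by elements of $P_n\cap L_n$, or by $\mathrm{diag}(h,I_{m+1})$, only rescales $I(S_{L,v},s_0)$ by a factor $|h|^{-s_0}$ --- it never produces new values of $s$. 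The second pillar is also unsupported: $L(v)=S_{L,v}(I_n)$ is the value of $\phi_{L,v}$ at $a=(1,\dots,1)$, not a ``leading coefficient at the origin'' of the asymptotic expansion, whose coefficient functions are governed by the Jacquet modules of $\Pi$ and depend on $v$ in a way that makes the injectivity of $L\mapsto$ (leading term) precisely the thing to be proved, not a given. (A smaller issue: $\mathrm{diag}(n,I_{m+1})$ for $n\in N_m$ does \emph{not} lie in $S_n$, so $S_{L,v}$ is not left $N_m$-invariant in the naive sense your Iwasawa reduction uses; the reduction to $m(a)$ has to go through $\mathrm{diag}(g,g,1)$ as in Jacquet--Rallis.)

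The paper's proof sidesteps all of this by varying $v$ instead of $s$, using Fourier analysis on $\mathcal{M}_m$. Since $s_0>r_\Pi$, the function $\Phi$ on $\mathcal{M}_m$ equal to $S_{L,v}(\mathrm{diag}(g,I_{m+1}))|g|^{s_0-m}$ on $G_m$ and to $0$ off $G_m$ lies in $L^1(\mathcal{M}_m)$. For $x\in\mathcal{M}_m$ let $u(x)=\begin{pmatrix} I_m & x & \\ & I_m & \\ & & 1\end{pmatrix}\in S_n$; conjugating $u(x)$ past $\mathrm{diag}(g,I_{m+1})$ and using $L(\Pi(u)w)=\Theta(u)L(w)$ gives
\[
\Gamma_L\bigl(\Pi(u(x))v\bigr)=\int_{G_m}\theta(\mathrm{Tr}(gx))\,S_{L,v}(\mathrm{diag}(g,I_{m+1}))\,|g|^{s_0}\,dg=\widehat{\Phi}(x).
\]
Thus $\Gamma_L=0$ forces $\widehat{\Phi}\equiv 0$, hence $\Phi=0$ almost everywhere, hence $\Phi\equiv 0$ on $G_m$ by continuity there, and evaluating at $g=I_m$ yields $L(v)=0$ for every $v$. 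This is what makes the argument work at the single value $s_0$; without this idea (or a substitute that legitimately extracts $L$ from the vanishing at one $s$), your proposal does not establish injectivity.
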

\begin{proof}
We only need to check that if $\Gamma_L$ is zero, then so is $L$. So we suppose that $\Gamma_L$ is zero. Consider $\Phi(y)$ the function on $\mathcal M_m$, 
equal to $S_{L,v}(diag(\cdot,I_{m+1}))|\cdot|^{s_0-m}$ on $G_m$, and to zero outside $G_m$. Then $\Phi(y)$ 
is $L^1$ for a Haar measure on $\mathcal{M}_m$, because $S_{L,v}(diag(g,I_{m+1}))|g|^{s_0}$ is $L^1$ for a Haar measure on $G_m$ and $\mathcal M_m-G_m$ is of measure zero in $\mathcal M_m$, and we have the equality 
\[
\int_{\mathcal{M}_m} \Phi(y)dy= \int_{G_m}S_{L,v}(diag(g,I_{m+1}))|g|^{s_0}dg.
\]
 More generally, for any $x$ in $\mathcal{M}_m$, we have 
the equalities of absolutely convergent integrals:
\[
\begin{aligned}
\int_{G_m}S_{L,\pi \begin{pmatrix}I_m & x &  \\ &I_m& \\ & & 1 \end{pmatrix}v}(diag(g,I_{m+1}))|g|^{s_0}dg&=\int_{G_m}\theta(Tr(gx))S_{L,v}(diag(g,I_{m+1}))|g|^{s_0}dg \\
&=\int_{\mathcal{M}_m}\theta(Tr(xy))\Phi(y)dy=\widehat{\Phi}(x).
\end{aligned}
\]
But $\Gamma_L$ being zero implies that the first integral in this series of equality is zero, hence $\Phi$'s Fourier transform on $\mathcal{M}_m$ is zero. In particular, $\Phi$ is zero almost everywhere on $\mathcal{M}_m$, but as it is continuous on 
$G_m$, it must be zero on $G_m$. This implies that $S_{L,v}(I_n)=L(v)$ is zero for every $v$, i.e. that $L$ is zero.  
\end{proof} 

From here, we get the following multiplicity one result that is the key to proving the local functional equation.

\begin{prop} \label{m1} For  almost all $s$, the space $Hom_{S_n}(\mathcal W(\pi, \theta)\otimes \mathcal{C}_c^\infty (F^m),|.|^{-s/2})$ is of dimension at most $1$. 

\end{prop}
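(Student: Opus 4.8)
The plan is to transfer the multiplicity question from the Shalika group $S_n$ to its mirabolic subgroup $P_n\cap S_n$, then to the Levi $L_n$ by means of Proposition~\ref{mirabolicshalika}, and finally to settle it on $L_n$ by a Bernstein--Zelevinsky filtration argument that reduces to the multiplicity one of linear periods on smaller general linear groups.

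First I would use that $(\mathcal C_c^\infty(F^m),R_\theta)$ is, by construction, the compactly induced representation $ind_{P_n\cap S_n}^{S_n}(\Theta^{-1})$. Combining the projection formula
\[
\mathcal W(\pi,\theta)\otimes ind_{P_n\cap S_n}^{S_n}(\Theta^{-1})\;\simeq\; ind_{P_n\cap S_n}^{S_n}\big(\mathcal W(\pi,\theta)_{|P_n\cap S_n}\otimes\Theta^{-1}\big)
\]
with Frobenius reciprocity for $ind$ (up to modulus characters) yields a natural isomorphism
\[
Hom_{S_n}\big(\mathcal W(\pi,\theta)\otimes\mathcal C_c^\infty(F^m),\,|.|^{-s/2}\big)\;\simeq\;Hom_{P_n\cap S_n}\big(\mathcal W(\pi,\theta),\,\Theta_s\big),
\]
where $\Theta_s$ is the character of $P_n\cap S_n$ equal to $\Theta$ on the unipotent part and to a power of $|\det(\cdot)|$ affine in $s$ on the torus $\diag(g,g,1)$, the modulus characters $\delta_{P_n\cap S_n}$ and $\delta_{S_n}$ being absorbed into $\Theta_s$. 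So it is enough to bound $\dim Hom_{P_n\cap S_n}(\mathcal W(\pi,\theta),\Theta_s)$ by $1$ for all but finitely many $s$.

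Next, Proposition~\ref{mirabolicshalika} --- whose proof, the Fourier-transform injectivity argument on $\mathcal M_m$, adapts with only a shift of the exponent in $\chi_{s_0}$ to the twisted character $\Theta_s$ --- produces a linear injection $Hom_{P_n\cap S_n}(\mathcal W(\pi,\theta),\Theta_s)\hookrightarrow Hom_{P_n\cap L_n}(\mathcal W(\pi,\theta),\chi_s)$ for an appropriate character $\chi_s$ of $P_n\cap L_n$. One checks that $P_n\cap L_n\simeq P_{m+1}\times G_m$, the mirabolic of $G_{m+1}$ times $G_m$, with $G_{m+1}\times G_m$ realized as the Levi $L_n$. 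Setting $\Pi=\mathcal W(\pi,\theta)$, which has finite length, I would then filter $\Pi_{|P_n}$ by the Bernstein--Zelevinsky filtration whose successive quotients have the form $(\Phi^+)^{a}\Psi^+(\rho)$ with $\rho$ irreducible on $G_{n-1-a}$, restrict this filtration to $P_n\cap L_n\subset P_n$, and apply $Hom_{P_n\cap L_n}(-,\chi_s)$. After identifying $P_n\cap L_n$ with (a conjugate of) the mirabolic subgroup $P_n\cap H_n$ occurring in Corollary~\ref{homsteak} --- or, equivalently, re-running the Proposition~\ref{rec} bookkeeping directly for $L_n$ --- each graded piece contributes $Hom_{H_k}(\rho,\chi_s\mu)$ for a suitable $k\le n-1$ and a character $\mu$ independent of $\rho$, that is, a twisted linear period of $\rho$ on a smaller general linear group, which is at most one-dimensional by the multiplicity one theorem for linear models (\cite{JR} in even rank, together with its extension to the odd case). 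Finally, the central characters of the Jacquet modules $\Pi_{U'_i}$ run over a finite set, so the equivariance constraint that $\chi_s$ imposes on them forces all but at most one graded piece to contribute $0$ as soon as $s$ avoids a finite set of values determined by those exponents; summing over the finitely many pieces gives $\dim Hom_{P_n\cap L_n}(\Pi,\chi_s)\le1$ for almost all $s$, and the isomorphism of the first step concludes.

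The main obstacle is this filtration step. One must match the geometry of $P_n\cap L_n$ to the Bernstein--Zelevinsky derivatives precisely enough to invoke an analogue of Corollary~\ref{homsteak} --- the point at which $L_n$ has to be compared with, or conjugated into, $H_n$ --- one needs exactly the right multiplicity one input for the linear model on each smaller group that appears, in particular the odd-rank extension of \cite{JR}, and, most delicately, one has to follow the central characters through all the subquotients carefully enough to pin down the finite exceptional set of $s$ outside of which a single graded piece survives.
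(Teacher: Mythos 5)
Your proposal follows the same route as the paper: the chain of isomorphisms identifying $\mathcal C_c^\infty(F^m)$ with $ind_{P_n\cap S_n}^{S_n}(\Theta^{-1})$ plus Frobenius reciprocity to reduce to $Hom_{P_n\cap S_n}(\Pi,|.|^{s'}\Theta)$, the injection into $Hom_{P_n\cap L_n}(\Pi,\chi_s)$ via a twisted form of Proposition \ref{mirabolicshalika}, and then conjugation of $P_n\cap L_n$ to $P_n\cap H_n$ followed by the Bernstein--Zelevinsky filtration and Corollary \ref{homsteak}. The one genuine difference is the final step: the paper never invokes multiplicity one for linear periods. For every graded piece $(\Phi^+)^{n-k-1}\Psi^+(\tau)$ with $k\geq 1$, the central character of the irreducible derivative $\tau$ already forces $Hom_{H_k}(\tau,\chi\mu_n^k|.|^s)=0$ outside finitely many values of $q^{-s}$, and the only piece that survives is the bottom one $(\Phi^+)^{n-1}\Psi^+(\1)$, whose Hom space is $Hom_{H_0}(\1,\cdot)$, one-dimensional for trivial reasons; so the appeal to \cite{JR} (and especially to its odd-rank extension, which you would otherwise have to supply) is unnecessary --- you use it only to bound the single surviving piece, which you could instead identify as the bottom derivative and bound trivially. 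One bookkeeping point you should make explicit for your central-character step: the $s$-dependence of $\chi_s$ must not restrict to something constant in $s$ on the center of $G_k$ for any $k\geq 1$ (a character of the shape $\left(|g_1|/|g_2|\right)^{f(s)}$ would be constant on the center of an even-sized $H_k\simeq G_l\times G_l$). The paper arranges this by keeping the $s$-dependence in the full determinant, i.e.\ working with $\chi_{s_0}|\det(\cdot)|^{s}$ for a fixed $s_0$, which on the center of $G_k$ gives $|t|^{ks}$; you can achieve the same by choosing the exponent in the integral defining $\Gamma_L$ appropriately. With that adjustment your argument closes and coincides with the paper's.
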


\begin{proof}  We again let $\Pi$ denote $\mathcal W(\pi,\theta)$. Set $\chi=\chi_{s_0}$ as in Proposition \ref{mirabolicshalika}. 
We first prove that for all values of $q^{-s}$, 
except possibly a finite number, we have $\dim(Hom_{P_{n}\cap L_n}(\Pi,\chi|.|^s))\leq 1$. We can replace $P_{n}\cap L_n$ by $P_{n}\cap H_n$ in the statement we wish to prove, as both 
are conjugate in $G_n$ (and actually in $P_n$). Then according 
to Section 3.5 of \cite{BZ},
the restriction of  $\Pi$ to $P_n$ has a filtration by derivatives with each successive quotient of the form $(\Phi^+)^{n-k-1} \Psi^+(\tau)$  for $k\leq n-1$ and $\tau=\pi^{(n-k)}$ a representation of $G_k$, the $(n-k)^{th}$ derivative of $\pi$. Since the functors $\Phi^+$ and $\Psi^+$ are exact, we can replace each $\tau$ with its composition series (it is of finite length) and assume a filtration with successive quotients of the form $(\Phi^+)^{n-k-1} \Psi^+(\tau)$ with $\tau$ irreducible. For every irreducible representation $\tau$ of 
$G_k$, for $k\geq 1$, from Corollary \ref{homsteak} we deduce that 
\[
Hom_{P_n\cap H_n}((\Phi^+)^{n-k-1} \Psi^+(\tau),\chi|.|^s))=Hom_{H_k}(\tau,\chi\mu_n^k|.|^s) 
\]
and this last space  is zero except for a finite number of
 $q^{-s}$ as 
$\tau$ has a central character. For all other values of $q^{-s}$, we deduce that the functional must be non-zero on the bottom piece of the Bernstein-Zelevinsky filtration which is $(\Phi^+)^{n-1} \Psi^+(\1)$. Thus for all but finitely many values of $q^{-s}$ we have
\[
\dim(Hom_{P_n\cap H_n}(\Pi,\chi|.|^s))\leq \dim(Hom_{P_n\cap H_n}((\Phi^+)^{n-1} \Psi^+(\1),\chi|.|^s)).
\]
Again by Corollary \ref{homsteak} we have $Hom_{P_n\cap H_n}((\Phi^+)^{n-1} \Psi^+(\1),\chi|.|^s)\simeq Hom_{ H_0}(\1,\chi\mu_n^n|.|^s)$ which has dimension one. Hence this proves our assertion 
about $\dim(Hom_{P_n\cap L_n}(\Pi,\chi|.|^s))$.

Proposition \ref{mirabolicshalika} then implies that 
\[
\dim(Hom_{P_n\cap S_n}(\Pi,|.|^{s}\Theta))\leq 1
\]
for all values of $q^{-s}$ except a finite number. Now, we have the following series of isomorphisms:
\[
\begin{aligned}
Hom_{S_n}(\Pi \otimes \mathcal{C}_c^\infty(F^m),|.|^{-s/2}) &\simeq 
Hom_{S_n}(\Pi \otimes ind_{P_n\cap S_n}^{S_n}(\Theta^{-1}),|.|^{-s/2})\\
&\simeq Hom_{S_n}(\Pi, Ind_{S_n\cap P_n}^{S_n}(|.|^{-s/2+1/2}\Theta)) \\
&\simeq Hom_{S_n\cap P_n}(\Pi, |.|^{-(s-1/2)}\Theta)),
\end{aligned}
\]
the last isomorphism 
by Frobenius reciprocity. Hence for all but finitely many values of $q^{-s}$ the space $Hom_{S_n}(\Pi \otimes \mathcal{C}_c^\infty(F^m),|.|^{-s/2})$ is of dimension at most one. 
\end{proof}

This has as a consequence the functional equation of the exterior-square $L$-function in the odd case. 

\begin{thm}\label{ofe}
Let $\pi$ be an irreducible admissible representation of $G_n$. There exists an invertible element $\epsilon(s,\pi,\wedge^2,\theta)$ of $\C[q^{\pm s}]$, 
such that for every $W$ in $\mathcal W(\pi,\theta)$, one has the following functional
equation: 
\[
\epsilon(s,\pi,\wedge^2,\theta)\frac{J_\theta(s,W,\phi)}{L(s,\pi,\wedge^2)}=
\frac{J_{\theta^{-1}}(1-s,\rho(\tau) \widetilde{W},\widehat{\phi})}{L(1-s,\pi^\vee,\wedge^2)}.
\]
\end{thm}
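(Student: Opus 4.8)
The plan is to deduce Theorem \ref{ofe} from the multiplicity one statement of Proposition \ref{m1} together with the two explicit bilinear forms $B_{s,\pi,\theta}$ and $C_{s,\pi,\theta}$ constructed in Lemmas \ref{invariantbilinear} and \ref{invariantbilinear2}. By Lemma \ref{invariantbilinear}, $B_{s,\pi,\theta}$ lies in $Hom_{S_n}(\mathcal W(\pi,\theta)\otimes \mathcal C_c^\infty(F^m), |.|^{-s/2})$, and by Lemma \ref{invariantbilinear2} so does $C_{s,\pi,\theta}$. Proposition \ref{m1} says this Hom-space is at most one-dimensional for all but finitely many values of $q^{-s}$. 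Hence, provided $B_{s,\pi,\theta}$ is not identically zero, there is a scalar, which I will call $\epsilon(s,\pi,\wedge^2,\theta)$, with
\[
C_{s,\pi,\theta} = \epsilon(s,\pi,\wedge^2,\theta)^{-1} B_{s,\pi,\theta}
\]
as bilinear forms, for all but finitely many $q^{-s}$; unwinding the definitions of $B$ and $C$ gives precisely the asserted identity of the theorem. So the first step is to record that $B_{s,\pi,\theta}\not\equiv 0$: this follows from Lemma \ref{sameideal}, since the integrals $J_\theta(s,W,\phi)$ span the fractional ideal $L(s,\pi,\wedge^2)\C[q^{\pm s}]$, so after dividing by $L(s,\pi,\wedge^2)$ the form $B_{s,\pi,\theta}$ has image all of $\C[q^{\pm s}]$ and in particular is nonzero for every $s$.

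The second step is to upgrade the identity from "all but finitely many $q^{-s}$" to an identity of rational functions, and to check that the proportionality factor is an invertible element of $\C[q^{\pm s}]$. For each fixed pair $(W,\phi)$ both $J_\theta(s,W,\phi)/L(s,\pi,\wedge^2)$ and $J_{\theta^{-1}}(1-s,\rho(\tau)\widetilde W,\widehat\phi)/L(1-s,\pi^\vee,\wedge^2)$ lie in $\C(q^{-s})$ (indeed, by \cite{MY}, in $\C[q^{\pm s}]$), so $\epsilon(s,\pi,\wedge^2,\theta)$, defined a priori only off a finite set, extends to an element of $\C(q^{-s})$; choosing $(W,\phi)$ so that $B_{s,\pi,\theta}(W,\phi)$ is a nonzero constant — possible by the surjectivity just noted and the fact that $\rho(\phi)W$ is a finite sum of translates of $W$ — shows $\epsilon(s,\pi,\wedge^2,\theta)$ equals a single polynomial $J_{\theta^{-1}}(1-s,\dots)/(\text{const}\cdot L(1-s,\pi^\vee,\wedge^2))\in \C[q^{\pm s}]$. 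Running the same argument with the roles of $(\pi,s)$ and $(\pi^\vee,1-s)$ interchanged, and using that $\widetilde{\widetilde W}=W$ up to the obvious central adjustment and $\widehat{\widehat\phi}=\phi$ up to normalization of $du$, produces an inverse $\epsilon(1-s,\pi^\vee,\wedge^2,\theta^{-1})$, so the product of the two is $1$ and $\epsilon(s,\pi,\wedge^2,\theta)\in\C[q^{\pm s}]^\times$, as required.

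The main obstacle is really already dispatched inside Proposition \ref{m1} (and the propositions feeding it): the delicate point is the convergence and injectivity in Proposition \ref{mirabolicshalika}, which in turn rests on the asymptotic expansion of $S_{L,v}$ along the torus provided by Lemma \ref{lm} and the Jacquet--Rallis vanishing argument — this is exactly the ingredient that "was a problem in the even case" and whose resolution in the odd case relies on $\{\,diag(g,g,1)\mid g\in G_m\,\}$ fixing $L$. Once Proposition \ref{m1} is in hand, the deduction of Theorem \ref{ofe} is formal: match two elements of a one-dimensional space and unwind definitions. The only mild subtlety at this last stage is bookkeeping with the normalizations — the factor $|.|^{-s/2}$ versus $|.|^{-(s-1/2)}$ appearing in the Frobenius reciprocity chain in the proof of Proposition \ref{m1}, and the discrepancy between $J_\theta(s,W,\phi)$ and $J_\theta(s,W)$ — but none of these affects the existence of $\epsilon(s,\pi,\wedge^2,\theta)$, only its precise value, and they cancel cleanly when one writes $C_{s,\pi,\theta}$ out in terms of $B_{1-s,\pi^\vee,\theta^{-1}}$.
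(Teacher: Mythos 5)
Your first step coincides with the paper's: both $B_{s,\pi,\theta}$ and $C_{s,\pi,\theta}$ lie in $Hom_{S_n}(\mathcal W(\pi,\theta)\otimes\mathcal C_c^\infty(F^m),|.|^{-s/2})$, Proposition \ref{m1} gives proportionality off a finite set of $q^{-s}$, non-vanishing of $B_{s,\pi,\theta}$ comes from Lemma \ref{sameideal}, and rationality of both sides extends the factor to $\C(q^{-s})$ — this is exactly the paper's argument (note only that as written your relation $C=\epsilon^{-1}B$ defines the reciprocal of the factor in the statement; harmless once invertibility is known, but the unwinding should be stated with $\epsilon B=C$). Where you genuinely diverge is the proof that the factor is a unit of $\C[q^{\pm s}]$. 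The paper does this symmetrically and directly: since the $J_{\theta^{-1}}(1-s,\rho(\tau)\widetilde W,\widehat\phi)$ span $L(1-s,\pi^\vee,\wedge^2)\C[q^{\pm s}]$ (because $W\mapsto\widetilde W$ and $\phi\mapsto\widehat\phi$ are onto), one chooses finite families making the right-hand side equal to $L(1-s,\pi^\vee,\wedge^2)$, which gives $\epsilon^{-1}\in\C[q^{\pm s}]$, and families making the left numerator equal to $L(s,\pi,\wedge^2)$, which gives $\epsilon\in\C[q^{\pm s}]$; no second functional equation is needed. You instead prove $\epsilon\in\C[q^{\pm s}]$ by a test-vector choice (your single pair with $B$ a nonzero constant can indeed be produced, e.g. via $W'=\sum_i\rho(\phi_i)W_i$ and $\phi'$ the characteristic function of a small subgroup, though the paper simply sums over a finite family), and then obtain the inverse from the "double functional equation" $\epsilon(s,\pi,\wedge^2,\theta)\,\epsilon(1-s,\pi^\vee,\wedge^2,\theta^{-1})=\text{const}$. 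That route works but carries extra obligations you only gesture at: you must first have the theorem for $\pi^\vee$ with $\theta^{-1}$ (fine, but it should be said), and you must verify the involution bookkeeping — $\widetilde{\widetilde W}=W$, $\tau^2=I_n$ so $\rho(\tau)\widetilde{\rho(\tau)\widetilde W}=W$, and $\widehat{\widehat\phi}(x)=\phi(-x)$, whose sign contributes a constant (of the type $\omega_\pi(-1)$) rather than necessarily $1$; this does not affect invertibility, since a nonzero constant times a product of two elements of $\C[q^{\pm s}]$ equal to it still forces each to be a unit, but "the product of the two is $1$" should not be asserted without that computation. The paper's choice-of-families argument buys you invertibility with no involution identities at all, which is why it is the cleaner route here; your version buys, as a by-product, the standard duality relation between $\epsilon(s,\pi,\wedge^2,\theta)$ and $\epsilon(1-s,\pi^\vee,\wedge^2,\theta^{-1})$, which the paper does not record.
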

\begin{proof}
 As the bilinear forms $C_{s,\pi,\theta}$ and $B_{s,\pi,\theta}$ defined in Lemmas \ref{invariantbilinear} and 
\ref{invariantbilinear2} belong to $Hom_{S_n}(\mathcal W(\pi,\theta) \otimes \mathcal{C}_c^\infty(F^m),|.|^{-s/2})$, Proposition \ref{m1}  gives the existence of 
$\epsilon(s,\pi,\wedge^2,\theta)$ defined for all but finitely many values of $s$, such that  $\epsilon(s,\pi,\wedge^2,\theta) B_{s,\pi,\theta}(W,\phi)=C_{s, \pi, \theta}(W,\phi)$ for all $W$ and $\phi$ and almost all but finitely many $s$. Since  $B_{s,\pi,\theta}(W,\phi),\ C_{s, \pi, \theta}(W,\phi)\in \C(q^{-s})$, then  this $\epsilon(s,\pi,\wedge^2,\theta)$ extends uniquely to a  rational function $\epsilon(s,\pi,\wedge^2,\theta) \in \C(q^{-s})$  satisfying the equality in the statement.

As the integrals $J_{\theta^{-1}}(1-s,\widetilde{W},\widehat{\phi})$ span the fractional ideal $L(1-s,\pi^\vee,\wedge^2)\C[q^{\pm s}]$, one can always find a finite set of Whittaker functions $W_i$, and Schwartz functions $\phi_i$ satisfying 
\[
\sum_i J_{\theta^{-1}}(1-s,\rho(\tau)\widetilde{W_i},\widehat{\phi_i})=L(1-s,\pi^\vee,\wedge^2)\in \C[q^{-s}].
\]
 Therefore, for this choice of $\{ ({W}_i,\phi_i)\}$
we have \[
\epsilon(s,\pi,\wedge^2,\theta)\frac{\sum_i J_\theta(s,W_i,\phi_i)}{L(s,\pi,\wedge^2)}=1,
\]
 and
the factor $\epsilon(s,\pi,\wedge^2,\theta)$ is nonzero in $\C(q^{-s})$, with  $\epsilon(s,\pi,\wedge^2,\theta)^{-1}\in\C[q^{\pm s}]$. Now, there is also a choice of a finite set 
of Whittaker functions $W_j$, and Schwartz functions $\phi_j$ satisfying $\sum_j J_{\theta}(s,W_j,\phi_j)=L(s,\pi,\wedge^2)$, hence 
\[
\epsilon(s,\pi,\wedge^2,\theta)=\frac{\sum_j J_{\theta^{-1}}(1-s,\rho(\tau)\widetilde{W_j},\widehat{\phi_j})}{L(1-s,\pi^\vee,\wedge^2)}\in \C[q^{\pm s}],
\]
 thus 
it $\epsilon(s,\pi,\wedge^2,\theta)$ is a unit in $\C[q^{\pm s}]$.
\end{proof}

\subsection{Remarks on the local functional equation in the even case}

In  Theorem 4.1 of \cite{M2}, the second author proves the functional equation
for generic irreducible representations of $G_n$, with $n=2m$ even. To generalize to any irreducible admissible representation, one must argue with the Whittaker model $\mathcal W(\pi,\theta)$  as we do above. 
The proof uses Proposition 4.3 of the same paper, hence one needs to check that the irreducible  representation $\pi$ of the statement of this Proposition can safely be replaced by the Whittaker model $\mathcal W(\pi,\theta)$. Looking at the proof of this Proposition, we see that we need to extend Proposition 4.2 of \cite{M2}, and its immediate Corollary 4.1, to $\mathcal W(\pi,\theta)$. Proposition 4.2 is itself a consequence of Proposition 4.1, so that the only point is to extend Proposition 4.1 of \cite{M2} from an irreducible generic representation to a representation of the form $\Pi=\mathcal W(\pi,\theta)\simeq \Delta_1\times\cdots\times\Delta_r$ which is parabolically induced from irreducible generic representations $\Delta_i$ of smaller linear groups. Setting $L(s,\Pi)=\prod_i L(s,\Delta_i)$, the statement of Proposition 4.1 (and hence of Propositions 4.2 and 4.3) is still true for $\Pi$ thanks to Section 3 of Godement-Jacquet's first chapter \cite{GJ}.  
All the other arguments in the proof of Theorem 4.1 are  valid for any irreducible admissible representation, just as in the proof of Theorem 3.1 above. There is however one point, namely that the $\epsilon$ factor is a unit, the proof of which is not correct in
Theorem 4.1 of \cite{M2}.  One just needs to modify this bit
as in the proof of Theorem 3.1 above.

The conclusion of this discussion is that the local functional equation of
the exterior square $L$-function is now available for irreducible admissible representations
of $G_n$, for any $n$.

\subsection{Comparison with Kewat and Raghunathan} A local functional equation for the exterior square $L$-function in the case of odd $n=2m+1$ is given in the paper of Kewat and Raghunathan \cite{KR} in their Theorem 8.1. That local functional equation differs from the one we have presented here. Kewat and Raghunathan derive it, without detail, from a global functional equation.

 Let $k$ be a global field and $\AA$ the adeles of $k$. 
In \cite{KR}, the global functional equation in the case of odd $n$ is given in their Theorem 3.11. Let $\pi$ be a cuspidal representation of $G(\AA)$  with $n=2m+1$  and let $\varphi$ belong to the space $V_\pi$ of $\pi$. The global integral for the exterior square $L$-function in the odd case is given in formula (3.4) of \cite{KR}:
\[
I(s,\varphi)=\int \int\int \varphi\left(\begin{pmatrix} I_n & X & Y \\ &  I_n  &  \\ & & 1\end{pmatrix}
\begin{pmatrix} g &   & \\ &  g &  \\ & & 1 \end{pmatrix}\right)\psi(Tr(X))dXdY|\det g|^{s-1}dg,
\]
where $X$ and $Y$ are integrated over $\mathcal{M}_m(F)\backslash \mathcal{M}_m(\AA)$ and $k^m\backslash \AA^m$,  $g$ over 
$G_m(F)\backslash G_m(\AA)$, and $\psi$ is a non-trivial character of $k\backslash \AA$. The functional equation they claim  in Theorem 3.11 is 
\begin{equation}\label{gfe}
I(s,\varphi)=I(1-s,\varphi')
\end{equation}
for $\varphi'$ a suitable translate of $\tilde{\varphi}(g)=\varphi({^\iota g})$. The right translate is not specified, but from page 220 of \cite{JS} it seems it would be right translation by $d(w_m)=\bpm w_m\\ & w_m \\ & & 1\epm$.

Let $d(h)=diag(h,h,1)$ for $h$ in $GL_m(\AA)$. Then the left hand side of (\ref{gfe})  is a linear form on $V_\pi$ satisfying
\[
I(s, \rho(d(h))\varphi)=|\det h|^{1-s}I(s,\varphi)
\]
by a simple change of variables.
If we write out the right hand side of (\ref{gfe}) we find
this is again a linear functional on $V_\pi$ but now a change of variables gives
\[
I(1-s,[\rho(d(h))\varphi]')=|\det h|^{-s}I(1-s,\varphi').
\]
The only way  to reconcile the quasi-invariances of the two sides is if the functionals are both $0$, which they aren't. So the global functional equation found in \cite{KR} seems not to be correct. This issue will persist to the local functional equation, the second part of Theorem 8.1 of \cite{KR},  since it was derived from the global functional equation. So the local functional equation of \cite{KR} is incorrect as well.

This is not the global functional equation that appears in Jacquet-Shalika \cite{JS}. The global functional equation  in the odd case is given on page 220 of \cite{JS} as
\[
I(s, \varphi_1)=I(1-s, \varphi')
\]
where $\varphi'$ is a suitable translate of $\tilde{\varphi}_2$.  Here, if $\varphi\in V_\pi$ then $\varphi_1$ and $\varphi_2$ are defined on page 219 of \cite{JS}
\[
\varphi_1(g)=\int\varphi\left(g\bpm I_n \\ & I_n\\ & X & 1\epm\right)\Phi(X)\ dX
\]
and
\[
\varphi_2(g)=\int \varphi\left(g\bpm I_n & & Y\\ & I_n\\ & & 1\epm\right)\hat{\Phi}(Y)\ dY
\]
where $\Phi\in\mathcal S(\AA^n)$. Here we have the presence of extra unipotent integrations on the two sides, much as in the usual Hecke integrals for $GL_n\times GL_m$. The shape of the local functional equation we obtain in our Theorem \ref{ofe}  is derived from this global functional equation.  

Kewat and Raghunathan seem to have simply misinterpreted the formula in Jacquet and Shalika \cite{JS}.
Fortunately, the result in \cite{KR} doesn't really depend on the shape of the functional equation, just having local/global compatibility.  So this error should not affect the main result, Theorem 1.1, of \cite{KR}.

\end{document}